%

\documentclass[aap,MSNbibl,seceqn,nameyear,dvips]{arximspdf}
\usepackage{graphicx}
%

\doi{10.1214/13-AAP967} 
\volume{24}
\issue{5}
\pubyear{2014}
\firstpage{1918}
\lastpage{1945}

\makeatletter
\newcommand{\rrVert}{\Vert}
\newcommand{\llVert}{\Vert}
\newtheorem{lemma}{Lemma}
\newtheorem{theorem}{Theorem}
\newtheorem{proposition}{Proposition}
\newtheorem{corollary}{Corollary}
\newproclaim{definition}{Definition}
\makeatother

\begin{document}
\begin{frontmatter}

\title{Sensitivity analysis for diffusion processes constrained to an orthant\thanksref{T1}}
\runtitle{Sensitivity analysis constrained diffusion processes}

\begin{aug}
\author[A]{\fnms{A. B.} \snm{Dieker}\corref{}\ead[label=e1]{ton.dieker@isye.gatech.edu}}
\and
\author[B]{\fnms{X.} \snm{Gao}\ead[label=e2]{xfgao@se.cuhk.edu.hk}}
\runauthor{A. B. Dieker and X. Gao}
\affiliation{Georgia Institute of Technology and Chinese University of Hong Kong}
\address[A]{H. Milton Stewart School of Industrial\\
\quad and Systems Engineering\\
Georgia Institute of Technology\\
Atlanta, Georgia 30332\\
USA\\
\printead{e1}} 
\address[B]{Department of Systems Engineering\\
\quad and Engineering Management\\
Chinese University of Hong Kong\\
Shatin, N.T.\\
Hong Kong\\
P.R. China\\
\printead{e2}}
\end{aug}
\thankstext{T1}{Supported by NSF Grants 0926308, 0727400 and 1030589.}

\received{\smonth{7} \syear{2011}}
\revised{\smonth{6} \syear{2013}}

%
\begin{abstract}
This paper studies diffusion processes constrained to the positive orthant
under infinitesimal changes in the drift.
Our first main result states that any constrained function and its
(left) drift-derivative
is the unique solution to an augmented Skorohod problem.
Our second main result uses this characterization to establish a basic
adjoint relationship for the stationary
distribution of the constrained diffusion process jointly with its
left-derivative process.
\end{abstract}

%
\begin{keyword}[class=AMS]
\kwd{60J60}
\kwd{60K25}
\end{keyword}
\begin{keyword}
\kwd{Basic adjoint relationship}
\kwd{constrained diffusion processes}
\kwd{infinitesimal perturbation analysis}
\kwd{queueing networks}
\kwd{reflected Brownian motion}
\kwd{sensitivity analysis}
\kwd{Skorohod reflection map}
\end{keyword}

\end{frontmatter}

\section{Introduction}\label{secintroduction}
This paper is motivated by a desire to better understand the relation
between performance metrics
and control variables in a network with shared but limited resources.
We are specifically interested in service networks, where customers
seeking a certain
service may suffer from delays as a result of temporary insufficient
service capacity.
The control variables are the service capacities at the individual stations.
Many service processes can be modeled by stochastic (or queueing) networks,
and an important question is how resources should be allocated, given
random fluctuations in the arrivals and their interplay with
potentially random service times.
When planning horizons are long so that static allocation rules are required,
questions of this type are readily answered if the network has a
product-form structure
\citet{kleinrockbook}, \citet{MR1007270}. However, few results have been
obtained when this assumption
fails \citet{diekerghoshsquillantepub}, \citet{pollett2009}.
It is the goal of this paper to introduce new
tools in this context, which could be used in the context of both
sensitivity analysis and
system optimization.

We study diffusion processes and their ``derivatives,'' defined as
the change in the process under an infinitesimal change in the drift.
Although some of our results are stated more generally,
this paper focuses on diffusion processes for two reasons.
First, this framework allows us to explain key concepts in a tractable
yet relatively general setting.
Second, diffusion processes are rooted in heavy-traffic approximations for
stochastic networks, and the heavy-traffic assumption seems reasonable
in the context of resource allocation problems with systems operating
close to their capacity.
This paper studies the stationary distribution of diffusions and their
derivatives, as a proxy for
the long-term (steady-state) behavior. Although it is certainly
desirable to obtain time-dependent tools as well,
given the vast body of work on stationary results, making this
assumption is a natural first step.
The techniques developed in this paper are likely to be also relevant
in the time-dependent case.

We have two main results. The first is a statement on the behavior of
deterministic functions
under the well-known Skorohod reflection map with oblique reflection
(regulation), and states that
the map and its ``derivative'' are the unique solution to an augmented
version of the Skorohod problem.
Our proof of this result relies on recent insights into directional
derivatives by \citet{Mandel-ramanan},
which have been developed in the context of time-inhomogeneous systems
but are shown here to be useful
for sensitivity analysis as well.

Our second main result specializes to diffusion processes and studies
the stationary
distribution of solutions to the augmented Skorohod problem.
Given a \mbox{constrained} diffusion process $Z$ representing
the dynamics of the underlying stochastic network (i.e., the queue
lengths at each of the
stations), let the stochastic process $A$ represent the change in $Z$
under an infinitesimal change in the drift.
The two results combined say that the stationary distribution of the joint
processes $(Z, A)$ satisfies a kind of basic adjoint relation, which is
the analog
of the equation $\pi' Q =0$ for continuous-time Markov processes on a
discrete state space.
The proof relies on a delicate analysis of the jumps of $A$;
the process $A$ has jumps even if $Z$ is continuous.

The intuition behind the program carried out in this paper can be
summarized as follows.
Suppose $Z^\epsilon$ is a constrained diffusion process with drift
coefficient $\mu(\cdot)-\epsilon v$
in the interior of the orthant, where $v$ is an arbitrary nonnegative vector.
Suppose the processes $\{Z^\epsilon\}$ are driven by the same Brownian
motion for every $\epsilon\ge0$,
so that they are coupled.
The processes $Z\equiv Z^0$ and $Z^\epsilon$ are Markovian, and one
can therefore expect to be able to give
a basic adjoint relationship for their stationary distributions (should
they exist).
Moreover, $(Z,Z^\epsilon)$ and therefore $(Z,(Z-Z^\epsilon)/\epsilon)$
can be expected to be Markovian as a result of the coupling. Provided
one can make sense of
the pointwise limit $(Z,A)$ of $(Z,(Z-Z^\epsilon)/\epsilon)$ as
$\epsilon\to0+$,
one can expect that the distribution of $(Z,A)$ satisfies a similar
relationship. This results in an ``augmented''
basic adjoint relationship, which we state in Theorem~\ref{thmbarprimenew}.
The constrained diffusion processes studied in this paper are
pathwise solutions to stochastic differential equations with
reflection; see \citet{MR1110990}, \citet{MR2261058}.
We only consider left derivatives in this paper, although one could
develop similar tools and obtain similar results for right derivatives.
This would affect our two main results as follows.
On a sample-path level, the right derivative is
the left-continuous modification of the (right-continuous) left
derivative, see Section~\ref{secaugsk}
for a detailed discussion.
On a probabilistic level, studying the (left-continuous) right
derivative requires a different set of technical tools
since one ordinarily works with right-continuous stochastic processes.
We should expect that this change does not affect the stationary
distribution or
the basic adjoint relationship.

When carrying out the aforementioned approach,
we were surprised to find that,
even though $Z$ is known not to spend any time on low-dimensional faces,
it is critical to incorporate the jumps of $A$ when
$Z$ reaches those faces in order to formulate the basic adjoint relationship.

This work has the potential to lead to new numerical methods in the
context of optimization and sensitivity analysis for queueing networks,
which relieve or remove the need for computationally intensive or
numerically unstable
operations such as gradient estimation.
To explain, due to the division by $\epsilon$, any performance metric
of $(Z-Z^\epsilon)/\epsilon$
suffers from numerical instability issues for small $\epsilon>0$.
Researchers in stochastic optimization have developed several techniques
to mitigate this effect; see, for example, \citet{MR2331321}.
The approach taken in this paper is to analytically describe and
investigate the dynamics of the limit.
Our experience with state-of-the-art stochastic optimization
implementations in the context of
resource capacity management,
as documented in part in \citet{diekerghoshsquillantepub}, is that it
is computationally very costly
to obtain reliable gradient estimates and that the use of ``quick and
dirty'' estimates can have disastrous effects
on the compute time of a stochastic optimization procedure due to bias
and inherent random fluctuations.
Therefore, reliable (numerical) tools that give merely a rough idea of
the gradient can be desirable
and useful.
In particular, from an implementation perspective, heavy-traffic
gradient information can be
valuable even if a stochastic network is in moderate traffic.
(A light-traffic setting is not of prime interest
since one is typically interested in fine-tuning networks operating in
a regime where servers are idling relatively rarely.)

The framework of this paper is related to a body of literature known
as infinitesimal perturbation analysis \citeauthor{glassermanbook1991} (\citeyear{glassermanbook1991,glassermansurvey,MR1206536}), \citet{MR2273907}.
Infinite perturbation analysis also aims to perform sensitivity
analysis or
gradient estimation for performance metrics in (say) a queueing
network, and it does so
by formulating conditions under which an expectation and a
derivative operator can be interchanged.
Here, however, it is not our objective to seek such an interchange
involving a performance metric,
but instead we study the (whole)
stationary distribution of a stochastic process with its derivative process.


This paper is outlined as follows.
Section~\ref{seconedim} summarizes our approach in the
one-dimensional case, which serves
as a guide for our multidimensional results.
Section~\ref{secpreliminaries} discusses two technical preliminaries:
oblique reflection
maps and their derivatives.
In Section~\ref{secmainresults} we formulate our two main results.
Section~\ref{secproofthm2} is devoted to the proof of the first main result,
while Section~\ref{secproofthm3} gives the proof of the second main result.
A key role is played by jump measures, for which we obtain a
description in Section~\ref{appjumpmeasure}.
The appendices contain several technical digressions.

\subsection*{Notation}
For $J \in\mathbb{N}$, $\mathbb{R}^{J}$ denotes the $J$-dimensional
Euclidean space.
We denote\vadjust{\goodbreak} the space of real $n\times m$ matrices by $\mathbb M^{n\times m}$,
and the subset of nonnegative matrices by $\mathbb M^{n\times m}_+$.
All vectors are to be interpreted as column vectors, and we write
$M^j$ and $M_i$ for the $j$th column and the $i$th row of a matrix $M$,
respectively.
In particular, $v_i$ is the $i$th element of a vector $v$, and $M_i^j$
is element
$(i,j)$ of a matrix $M$.
Similarly, given a set $I\subseteq\{1,\ldots,J\}$, we
write $M_I$ and $M^I$ for the matrices consisting of the rows and
columns of $M$, respectively,
with indices in $I$.
Throughout, $E$ stands for the identity matrix and we write $\delta
_i^j$ for $E_i^j$.
We use the symbol $'$ for transpose.
The norms $\|\cdot\|_1$ and $\|\cdot\|_2$ stand for entrywise
$1$-norm and $2$-norm,
respectively, and are used for both vectors and matrices.

Given a measure space $(S, {\mathcal S})$, a measurable vector-valued
function $h\dvtx S\to\mathbb{R}^J$ on $(S,{\mathcal S})$, and a vector of measures
$\nu=(\nu_1,\ldots,\nu_J)$ on $(S,{\mathcal S})$, we set
\[
\int h(x) \nu(dx) = \int h(x)\cdot\nu(dx),
\]
provided the right-hand side exists. We shall also employ this
notation when $h$ and $\nu$ are matrix-valued. That is, we write for
$h\dvtx S\to\mathbb M^{J\times J}$ and an $\mathbb M^{J\times J}$-valued
measure $\nu$ on $(S,{\mathcal S})$,
\[
\int h(x) \nu(dx) = \int\bigl\langle h(x), \nu(dx)\bigr\rangle_{\mathrm{HS}},
\]
where $\langle\cdot,\cdot\rangle_{\mathrm{HS}}$ is the
Hilbert--Schmidt inner
product on $\mathbb M^{J\times J}$ given by
\[
\langle M_1,M_2\rangle_{\mathrm{HS}} =
\operatorname{tr}\bigl(M_1'M_2\bigr).
\]
%
For a function $g\dvtx  \mathbb M^{J\times J} \to\mathbb{R}$, we define
$\nabla g\dvtx  \mathbb M^{J\times J}\to
\mathbb M^{J\times J}$ as the function for which element $(i,j)$ is given
by the directional derivative of $g$ in the direction of the matrix
with only zero entries except for
element $(i,j)$, where its entry is 1.
We also write, for $i=1,\ldots,J$, $F_i = \{(z,a)\in\mathbb{R}^J_+
\times
\mathbb M^{J\times J}\dvtx  z_i=0\}$, $F^a_i = \{(z,a)\in\mathbb{R}^J_+
\times
\mathbb M^{J\times J}\dvtx  a_i=0\}$. The space of functions
$f\dvtx \mathbb{R}_+^{J} \times\mathbb M_+ ^ { J \times J }\rightarrow
\mathbb{R}$ which are
twice continuously differentiable with bounded derivatives is
denoted by \mbox{$C_b^2(\mathbb{R}_+^{J}\times\mathbb M_+ ^ { J \times J })$}.

We write $\mathbb{D}_+^J$ for the space of $\mathbb{R}_+^{J}$-valued
functions on $\mathbb{R}_+$ which are
right-continuous on $\mathbb{R}_+$ with left limits in $(0,\infty)$.
The subset of continuous functions is written as $C^J$, and $C^J_+$ denotes
the set of nonnegative continuous functions.
Similarly, we write $\mathbb{D}^{J\times J}$ for the space of $\mathbb
M^{J\times J}$-valued right-continuous
functions on~$\mathbb{R}_+$ with left limits. The subset of $\mathbb
M_+^{J\times J}$-valued functions is denoted
by $\mathbb{D}_+^{J\times J}$.

\section{A motivating one-dimensional result}\label{seconedim}
Fix some $\theta<0$. For any $\epsilon\ge0$, we
let $Z^\epsilon$ be a one-dimensional reflected Brownian motion with drift
$\theta-\epsilon<0$ and variance $\sigma^2$. That is,
\[
Z^\epsilon(t)=X^\epsilon(t)+Y^\epsilon(t) \ge0,
\]
where $X^\epsilon$ is a Brownian motion with drift $\theta-\epsilon$
and variance
$\sigma^2$, and the regulating term $Y^\epsilon$ is given by
\[
Y^\epsilon(t)= \max \Bigl( \sup_{ 0 \le s \le t }
\bigl[-X^\epsilon (s)\bigr],0 \Bigr).
\]
Suppose the family $\{Z^\epsilon\dvtx \epsilon\ge0\}$ is coupled in the
sense that $X^\epsilon(t) = W(t) +(\theta-\epsilon) t$
for some driftless Brownian motion $W$. Write $Z\equiv Z^0$.

It follows from
Theorem~1.1 in \citet{Mandel-ramanan}
[see also Lemma~5.2 and equation~(5.7) in \citet{MandelbaumMassey}]
that, for each $t\ge0$, the limit
%
\begin{equation}
\label{eq1dAtpre} A(t) \equiv\lim_{ \epsilon\rightarrow0+} \frac{1}{\epsilon}
\bigl(Z(t) - Z^{\epsilon} (t)\bigr)
\end{equation}
exists. We also have the following explicit formula:
%
\begin{equation}
\label{eq1dAt} A(t)=t- B(t),
\end{equation}
where
\[
B(t) = \sup\bigl\{ s \in[0,t]\dvtx Z(s) = 0\bigr\}
\]
and $\sup\varnothing=0 $ by convention. In view of the definition of $A$
in (\ref{eq1dAtpre}), we call it the derivative process of $Z$.

We now relate these notions to sensitivity analysis.
Our investigations are motivated
by the following sequence of equalities: for any ``smooth'' function
(performance
measure) $\phi$, one could expect that
%
\begin{equation}
\label{eqinterchange} \frac{d}{{d{\epsilon}}}\mathbb{E} \bigl[\phi\bigl({Z^\epsilon}(
\infty )\bigr) \bigr] = \mathbb{E} \biggl[ \frac{d}{{d{\epsilon}}} \phi
\bigl({Z^\epsilon }(\infty)\bigr) \biggr] = 
\mathbb{E} \bigl[A(\infty) \phi'\bigl(Z(\infty)\bigr) \bigr].
\end{equation}
Thus, to study (infinitesimal) changes in the steady-state performance
measure under
infinitesimal changes in the drift $\theta$, one is led to investigating
the stationary distribution of $(Z,A)$ (assuming it exists).
We are able to justify the interchange of expectation and derivative
in the above equalities in the one-dimensional case (see below),
but a justification in the setting of general multidimensional
constrained diffusions
requires a different set of techniques and falls outside the scope of
this paper.

%
\begin{figure}

\includegraphics{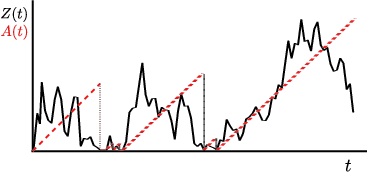}

\caption{Sample paths of $(Z,A)$ as a function of time.
The solid black curve is $Z$, while the dashed red curve is $A$.
The slope of $A$ is 1 whenever it is continuous,
and $A$ jumps to 0 whenever $Z$ hits 0.}\label{figZAonedim}
\end{figure}

One readily checks that the sample paths of the process $B$ are
nondecreasing, that they are right-continuous with left-hand limits
and that $A$ has positive drift and negative jumps. In particular,
the process $A$ is of finite variation, and $(Z, A)$ is a
semimartingale with jumps. An illustration of the process $(Z,A)$ is
given in Figure~\ref{figZAonedim}.
From Ito's formula in conjunction
with sample path properties of $A$,\vadjust{\goodbreak} we obtain the following result.
We suppress further details of the proof, since this program is carried
out in greater generality
in Section~\ref{secproofthm3}.

\begin{theorem}
\label{thmonedim}
Let $Z$ be a one-dimensional reflected Brownian
motion with drift $\theta$ and variance $\sigma^2$. Let $A$ be
defined in (\ref{eq1dAt}). Suppose that the process $(Z, A)$ has a unique
stationary distribution $\pi$. For any
$f \in C_b^2(\mathbb{R}_+\times\mathbb{R}_+)$, we have the following
relationship:
%
\begin{eqnarray}\label{eq1dbar}
0 &=& \int_0^\infty\!\!\int
_0^\infty  \biggl[ \frac{1}{2}{\sigma^2}\frac{{\partial^2}}{\partial{z^2}}f(z,a) + {\theta}\frac{\partial}{{\partial z}}f(z,a)\nonumber
\\
&&\hspace*{57pt}{} + \frac{\partial}{{\partial a}}f(z,a) - \frac{\partial}{{\partial a}}f(0,a) \biggr] \pi(dz,da)
\\
& &{} - \frac{\partial}{{\partial z}}f(0,0){\theta}.\nonumber
\end{eqnarray}
\end{theorem}

One can go further and derive the Laplace transform of $\pi$
using this theorem; see Appendix~\ref{apppiLaplace}.
One then finds that, for any $\alpha, \eta>0$,
%
\begin{equation}
\label{eq1dlap} \int_0^\infty\!\! \int
_0^\infty{{e^{ -
\alpha z - \eta a}}}  \pi(dz,da) =
\frac{{ - 2\theta}}{{\alpha
{\sigma^2} - \theta+ \sqrt{2\eta{\sigma^2} + {\theta^2}} }}.
\end{equation}
In particular, the theorem completely
determines the stationary measure $\pi$.
It is also possible to derive this result immediately from standard fluctuation
identities for Brownian motion with drift, using results from \citet{debickiquasiproduct2006}.
In fact, since the corresponding densities are known explicitly (or can
be found by inverting
the Laplace transform),
it is possible to write down the density of $(Z(\infty),A(\infty))$
in closed form.
Using the resulting expression, it can be verified directly that (\ref
{eqinterchange})
indeed holds.

\section{Oblique reflection maps and their directional derivatives}\label{secpreliminaries}
This section contains the technical preliminaries to formulate
a multidimensional analog of Theorem~\ref{thmonedim}.
We need the following definition to introduce the analogs of the
processes $A$ and $B$.

\begin{definition}[(Oblique reflection map)]
\label{defORM}
Suppose a given $J \times J$ real matrix $R$ can be written as $R=E-P$,
where $P$ is a nonnegative matrix with spectral radius less than one
and zeros on the diagonal.
Then for every $x \in\mathbb{D}^J$, there exists a unique pair
$(y,z)\in\mathbb{D}_{+}^{J} \times
\mathbb{D}_{+}^{J}$ satisfying the following conditions:

\begin{longlist}[(2)]
\item[(1)] $z(t)=x(t)+Ry(t) \ge0$ for $t\ge0$;
\item[(2)] $y(0)=0$, $y$ is componentwise nondecreasing and
\[
\int_0^\infty z(t)\,dy(t)=0.
\]
\end{longlist}
We write $y= \Phi(x)$ and $z=\Gamma(x)$ for the oblique reflection map.\vadjust{\goodbreak}
\end{definition}

The reflection map gives rise to left derivatives
as formalized in the following definition.
Existence of the derivatives is guaranteed by Theorem~1.1 in \citet{Mandel-ramanan}.

\begin{definition}[(Derivatives of the reflection map)]\label{defderivative}
Let $\chi(t)=tE$ and define
the $\mathbb M^{J\times J}$-valued functions $a$ and $b$ by defining
$a = \lim_{ \epsilon\rightarrow0+}a_{\epsilon}$ and
$b= \lim_{\epsilon\rightarrow0+} b_{\epsilon}$,
where the limits are to be understood as pointwise limits and, for
$j=1,\ldots,J$,
%
\begin{eqnarray}
\label{eqAi} \qquad a_{\epsilon}^j &\equiv& \frac{1}{\varepsilon} \bigl[
\Gamma(x) -\Gamma\bigl(x - \epsilon\chi ^j\bigr) \bigr],
\qquad
b_{\epsilon}^j \equiv -\frac{1}{\varepsilon} \bigl[
\Phi(x)-\Phi\bigl(x - \epsilon\chi^j\bigr) \bigr].
\end{eqnarray}
Then we have for each $t \ge0$,
%
\begin{equation}
\label{eqAt} a(t)= tE - R b(t).
\end{equation}
For notational convenience, we write $a=\Gamma'(x)$ and $b=-\Phi'(x)$.
\end{definition}

\section{Main results}\label{secmainresults}
This section states the main results of this paper. The first
result makes the connection between
derivatives and an augmented Skorohod problem, which we define
momentarily. The second result is a basic adjoint
relationship for the stationary distribution of solutions
to the augmented Skorohod problem with diffusion input.
The basic adjoint relationship is the analog of the equation $\pi' Q=0$
for Markov chains on a countable state space as mentioned in the \hyperref[secintroduction]{Introduction}.

\subsection{Augmented Skorohod problems and derivatives}\label{secaugsk}
In this section we introduce the augmented Skorohod problem and
connect it with derivatives of the oblique reflection map.

\begin{definition}[(Augmented Skorohod problem)]
\label{defaugskorohod} Suppose we are given two $J \times J$ real
matrices $R=E-P$ and $\widetilde R=E-\widetilde P $, where both $P$ and
$\widetilde P$ are nonnegative matrices with spectral radius less than
one and zeros on the diagonal. Given $(x, \chi) \in C^{J} \times
C^{J \times J}$ with $\chi$ componentwise nonnegative and
nondecreasing, we~say that $(z, y, a, b) \in C^J_+\times C^J_+
\times\mathbb{D}_+^{J\times J}\times\mathbb{D}_+^{J\times J}$
satisfies\vspace*{1pt} the
augmented Skorohod problem associated with $(R, \widetilde R)$ for $(x,
\chi)$ if the following conditions are satisfied:
\begin{longlist}[(3)]
\item[(1)] $z(t)=x(t)+Ry(t)$ for $t\ge0$;
\item[(2)] $y(0)=0$, $y$ is componentwise nondecreasing and
\[
\int_0^\infty z(t)\,dy(t)=0;
\]
\item[(3)] $a(t)=\chi(t) - \widetilde R b(t)$ for $t\ge0$;
\item[(4)] $b(0)=0$, $b(t) \ge0$, $b$ is componentwise nondecreasing and,
for $j=1,\ldots,J$,
%
\begin{equation}
\label{eqzdeltacomplem} \int_0^\infty z(t)\,d
b^j(t)=0;
\end{equation}
\item[(5)] For $i=1, \ldots, J$ and $t\ge0$, $z_i(t)=0$ implies $a_i(t)=0$.
\end{longlist}
\end{definition}

Building on results from \citet{Mandel-ramanan}, we show in
Appendix~\ref{appuniqueness} that the augmented Skorohod problem has
a unique solution.
To interpret solutions to the augmented Skorohod problem,
we found it easiest to think of the dynamics of $(z,a^j)$ for each
$j=1,\ldots,J$ separately.
When $z$ hits the face $z_I=0$, then $a^j$ jumps to the face $a^j_I=0$
in the direction of the unique vector in the column space of $\widetilde R^I$ which
brings it to that face.
We refer to Figure~\ref{figZAtwodim} for an illustrative example in
the two-dimensional case.

%
\begin{figure}

\includegraphics{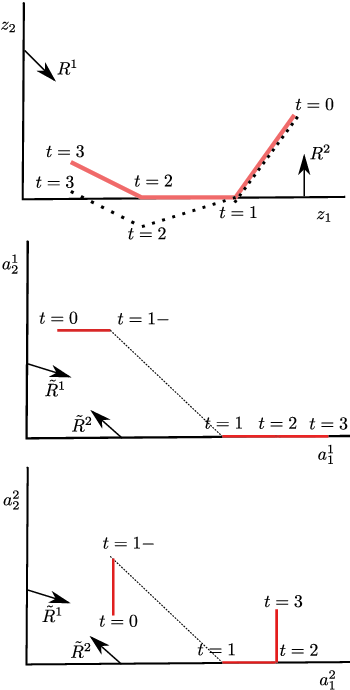}

\caption{The\vspace*{1pt} first diagram depicts a trajectory of $z$, with
corresponding ``free'' path $x$ (dotted).
In the second and third diagrams, the trajectories of $a^1$ and $a^2$
travel at unit rate right and up,
respectively, until $z$ hits $\partial\mathbb{R}^2_+$.
The face $z_2=0$ is hit at time $t=1$, causing $a^1$ and $a^2$ to jump
to the faces $a^1_2=0$ and $a^2_2=0$, respectively, in direction
$\widetilde R^2$.
Note that both $z(0)$ and $a(0)=\chi(0)$ are nonzero in these diagrams.}\label{figZAtwodim}
\end{figure}

Unlike requirements 2 and 4 in Definition~\ref{defaugskorohod},
requirement 5 is not a ``complementarity'' condition.
In view of the sample path dynamics in Figure~\ref{figZAtwodim},
it may seem reasonable to replace requirement 5 by $\int_0^\infty
a^j(t)\,dy(t)=0$ or another
complementarity condition between $(y,z)$ and $(a,b)$.
In that case, however, the augmented Skorohod will fail to have a
unique solution.
This can be seen by verifying that both the left derivative and the
right-derivative of the reflection map
satisfy $\int_0^\infty a^j(t)\,dy(t)=0$ but only the left derivative
(as defined in Definition~\ref{defaugskorohod}) satisfies requirement 5.

We now make a connection between derivatives (sensitivity analysis)
and solutions to the augmented Skorohod problem.
Note that, unlike in Figure~\ref{figZAtwodim}, one always has
$a(0)=\chi(0)=0$ in this case.

\begin{theorem} \label{thmasp}
Fix some $x\in C^J$, and let $z=\Gamma(x)$ and $y= \Phi(x)$ be
given by the oblique reflection map.
Define the derivatives $a=\Gamma'(x)$ and $b=-\Phi'(x)$ as in
Definition~\ref{defderivative}.
Set $\chi(t)= t E $ for $t\ge0$.
Then $(z, y, a, b)$ satisfies the augmented Skorohod problem associated
with $(R,R)$ for $(x, \chi)$.
\end{theorem}

\subsection{Stationary distribution of constrained diffusions and their derivatives}
%
Our second main result specializes to diffusion processes and studies
the stationary
distribution of solutions to the augmented Skorohod problem.
We show that it satisfies a generalized version of the basic adjoint
relationship (BAR) for reflected Brownian motion.
The proof relies on Ito's formula in conjunction with properties
developed in the previous section.
All results are formulated in terms of solutions to the augmented
Skorohod problem,
and the special case $\widetilde R =R$ is of primary interest for the
derivative process.


We first discuss the construction of constrained diffusion processes.
We work with a $d$-dimensional standard Brownian motion $W = \{W(t)\dvtx  t
\ge0\}$ adapted to some filtration $\{\mathcal F_t\}$,
on an underlying probability space $(\Omega,\mathcal F,\mathbb{P})$.
We are given functions $\theta$ and $\sigma$ on $\mathbb{R}_+^J$
taking values in $\mathbb{R}^J$ and $\mathbb M^{J\times d}$, respectively,
which satisfy the following standard Lipschitz and growth conditions:
(1)~For some \mbox{$L<\infty$}, we have
$\|\sigma(x) - \sigma(y)\|_2 + \|\theta(x) - \theta(y)\|_2 \le L \|
x - y\|_2$ for all $x, y \in\mathbb{R}_+^J$.
(2)~For some $K<\infty$, we have $\|\theta(x)\|_2^2+\|\sigma(x)\|
_2^2\le K(1+\|x\|_2^2)$ for $x\in\mathbb{R}_+^J$.
Given any initial condition $Z(0)$ with $\mathbb{E}\|Z(0)\|_2^2<\infty$,
there exists a pathwise unique, strong solution $\{Z(t)\dvtx t\ge0\}$ to the
stochastic differential equation with reflection~(SDER)
%
\begin{equation}
\label{eqSDER} dZ(t) = \theta\bigl(Z(t)\bigr) \,dt + \sigma\bigl(Z(t)\bigr) \,dW (t)
+ R \,dY (t).
\end{equation}
This equation is shorthand for the statement that, almost surely,
$Z=\Gamma(X)$ 
and $X(t) = Z(0) + \int_0^t \theta(Z(s)) \,ds + \int_0^t \sigma(Z(s))
\,dW(s)$ for $t\ge0$.
Moreover, $\mathbb{E}\|Z(t)\|_2^2$ is locally bounded as a function of $t$.
For these and related results, see \citet{MR0397893},
\citet{MR1110990}, \citet{karatzashrevebrownianmotion1991}, \citet{MR2261058}.
In particular, we have $Z(t)\in\mathbb{R}^J_+$ for all $t\ge0$.
We define the diffusion matrix $\Sigma$ through $\Sigma(z) = \sigma
(z)\sigma(z)'$ for $z\in\mathbb{R}_+^J$.
The special case of reflected Brownian motion follows upon taking
constant functions $\sigma$ and $\theta$.
Throughout this paper, we only work with constrained diffusion
processes that can be
obtained through the oblique reflection map of Definition~\ref{defORM},
and for which the time $Z$ spends $\partial\mathbb{R}_+^J$ has
Lebesgue measure zero almost surely
(this is only used in Section~\ref{appjumpmeasure}).
Although the notions of SDER and their solutions can be defined more generally,
our results cannot be extended to other settings using the present framework.

We next introduce an $\mathbb M^{J\times J}_+$-valued process $A=\{
A(t)\dvtx t\ge0\}$ through an augmented Skorohod problem.
Although the special choice $\widetilde R=R$ is most relevant for us given
the connection with the derivative process, our treatment
is not restricted to that case.
Given some $A(0)$, suppose that $(Z,Y,A,B)$ satisfies the augmented
Skorohod problem
associated with $(R,\widetilde R)$ for $(X,\chi)$ with $\chi(t) =A(0)+Et$
and $X$ as before.
Also suppose $(Z(0),A(0))$ has some distribution $u$ 
satisfying $\int\|z\|_2^2 u(dz,da)<\infty$. This assumption
guarantees existence of $Z$ on a sample-path level, and therefore we
do not need moment assumptions on $A(0)$ in order to guarantee
existence of the process $A$. The derivative process always starts
at the origin (i.e., the zero matrix), but here we have defined $A$
with an arbitrary initial distribution since we are interested in
stationary distributions for $(Z,A)$.
Recall that $\pi$ is said to be a stationary distribution for $(Z,A)$
if all marginal distributions of $(Z,A)$ are $\pi$
when $(Z(0),A(0))$ has distribution $\pi$, that is, for every bounded
measurable function
$f\dvtx  \mathbb{R}^J_+\times\mathbb M^{J\times J}\to\mathbb{R}$ and for
every $t\ge0$,
%
\begin{equation}
\label{eqdefstationary} \mathbb{E}\bigl[f\bigl(Z(t), A(t) \bigr)\bigr] = \int f(z,a)
\pi(dz,da).
\end{equation}
In view of Theorem~\ref{thmasp}, although a justification is outside
the scope of this paper, we think of the
stationary distribution of $(Z,A)$ with $\widetilde R=R$ as the
limiting distribution of $Z$ jointly with its derivative process.

We define the following operators:
$Q_I$ is a projection operator with the following property.
The matrix $Q_I(a)$ is obtained from $a$ by subtracting columns of
$\widetilde R^I$,
in such a way that the rows of $Q_I(a)$ with indices in $I$ become zero.
That is, we have
%
\begin{equation}
\label{eqdefQI} Q_I(a) = a - \widetilde R^I \bigl(\widetilde R_I^I\bigr)^{-1} a_I,
\end{equation}
where $\widetilde R_I^I$ is the principal submatrix of $\widetilde R$ obtained by
removing rows and columns from $\widetilde R$ which\vspace*{1pt} do not lie in $I$.
When $I = \varnothing$, we set $Q_I (a)=a$ for $a \in\mathbb M^{J\times J}$.

We also define operators $L$ and $T$ on $C^2_b(\mathbb{R}_+^J\times
\mathbb M_+^{J\times J})$ through
%
\begin{eqnarray}\label{eqdefT}
Lf(\cdot)&=&\tfrac{1}{2} \bigl\langle\Sigma(\cdot),
H_zf(\cdot) \bigr\rangle_{\mathrm{HS}} 
+\bigl\langle\theta(\cdot),\nabla_z f(\cdot)\bigr\rangle,
\nonumber\\[-8pt]\\[-8pt]
Tf(\cdot)&=&Lf(\cdot)+ \operatorname{tr}\bigl(
\nabla_a f (\cdot)\bigr),\nonumber
\end{eqnarray}
where $\nabla_z f$ and $H_zf$ denote the gradient and Hessian, respectively,
with respect to the first argument of $f$, and we use $\nabla_a f$ as
discussed in Section~\ref{secintroduction}.
Thus $\operatorname{tr}(\nabla_a)$ is shorthand for $\sum_{i=1}^J
\,d/{da_{ii}}$.

We can now formulate the following theorem, which is our second main result.
We write $I^c$ for the complement of a set $I$.
We write $z_I$ for the subvector of $z$ consisting of the components
with indices in $I$ as before,
and we also let $z|_I$ denote the projection of $z$ to $\{z\dvtx z_{I^c}=0\}$.

\begin{theorem}[(Basic adjoint relationship)]\label{thmbarprimenew}
Let the processes $Z$ and $A$ be defined as above,
and suppose that $(Z, A)$ has a unique stationary distribution $\pi$
with $\int(\|z\|_2^2+\|a\|_1)\pi(dz,da)<\infty$.
Then there exists a finite Borel measure $\nu$ on $\bigcup_i(F_i\cap F_i^a)$
and, for $I\subseteq\{1,\ldots,J\}$, finite Borel measures $u_I$ on
$(0,\infty)^{|I^c|}\times\mathbb M^{J\times J}_+$
such that for any $f \in
C^2_b(\mathbb{R}^J_+\times\mathbb M_+^{J\times J})$, the following
relationship
holds:
%
\begin{eqnarray}\label{eqbarprimenew}
\quad&& \int_{\mathbb{R}^J_+ \times\mathbb M_+^{J\times J} } T f (z,a) \,d\pi(z,a) +\int
_{\bigcup_i(F_i\cap F^a_i)} \bigl[R'\nabla_z f(z,a)\bigr]
\,d\nu(z,a)\nonumber
\\
&&\qquad{}+\sum_{I\subseteq\{1,\ldots,J\}\dvtx I\neq\varnothing} \int_{(0,\infty)^{|I^c|}\times\mathbb M_+^{J\times J}}
\bigl[f\bigl(z|_{I^c},Q_I(a)\bigr)
\\
&&\hspace*{179pt}{} -f(z|_{I^c},a)\bigr]\,du_I(z_{I^c},a)=0,\nonumber
\end{eqnarray}
where the operators $Q_I$ and $T$ are given in (\ref{eqdefQI}) and
(\ref{eqdefT}), respectively.
\end{theorem}

Section~\ref{secboundaryterm} shows that the measures $\nu$ and
$u_I$, $I\subseteq\{1,\ldots,J\}$
are completely determined by $\pi$, and expresses these measures in
terms of $\pi$.
We believe that~(\ref{eqbarprimenew}) fully determines $\pi$, $\nu$
and the $u_I$ measures,
but it is outside the scope of this paper to prove this.
For recent developments along these lines, see \citet{MR2834200},
\citet{kangramanancharacterization2012}.

Theorem~\ref{thmbarprimenew} does not have the same form as
Theorem~\ref{thmonedim}, and
our next result brings these two forms closer.
It is obtained by substituting a special class of functions in (\ref
{eqbarprimenew}) so that the last term in (\ref{eqbarprimenew}) vanishes.
To formulate the result, we need the following family of operators:
for any $f\in C^2_b(\mathbb{R}^J_+\times\mathbb M^{J\times J})$ and
each set $I \subseteq\{1,2, \ldots, J\}$, let
%
\begin{eqnarray}
(O_I f ) (z,a)&=&\sum_{S \subseteq{\{1,\ldots,J \}\setminus I} }
(-1)^{|S|} f\bigl(\Pi_{S \cup I } z, Q_I (a)\bigr),
\label{eqOIf}
\\
O&=& \sum_{ I \subseteq{\{1,\ldots,J \}} } O_I, \label{eqO}
\end{eqnarray}
where $\Pi_{S \cup I }$ is the projection operator which sets the
coordinates in ${S \cup I }$ equal to~0.

\begin{corollary} \label{corbarprime}
Let the processes $Z$ and $A$ be defined as above,
and suppose that $(Z, A)$ has a unique stationary distribution $\pi$
with $\int(\|z\|_2^2+\|a\|_1)\pi(dz,da)<\infty$.
Then there exists a finite Borel measure $\nu$ such that for any $f
\in
C^2_b(\mathbb{R}^J_+\times\mathbb M_+^{J\times J})$, the following
relationship
holds:
%
\begin{eqnarray}\label{eqbarprime}
&& \int_{\mathbb{R}^J_+ \times\mathbb M_+^{J\times J} } [ T \circ O f] (z,a) \,d\pi(z,a)
\nonumber\\[-8pt]\\[-8pt]
&&\qquad {} +\int_{\bigcup_i(F_i\cap F^a_i)} \bigl[R'\nabla_z (Of) (z,a)
\bigr] \,d\nu(z,a) =0,\nonumber
\end{eqnarray}
where the operators $T$ and $O$ are given in (\ref{eqdefT}) and (\ref{eqO}).
\end{corollary}

We remark that the proof of this corollary shows that (\ref
{eqbarprime}) is equivalent
to several equations. That is, for any $f \in
C^2_b(\mathbb{R}^J_+\times\mathbb M_+^{J\times J})$ and each set $I
\subseteq\{1,2,\ldots, J\}$, $\pi$ and $\nu$ must satisfy
%
\begin{eqnarray}\label{eqbarprimedisag}
&& \int_{\mathbb{R}^J_+ \times\mathbb M_+^{J\times J} } [T \circ O_I f ] (z,a) \,d
\pi(z,a)
\nonumber\\[-8pt]\\[-8pt]
&&\qquad{} +\int_{\bigcup_i(F_i\cap F^a_i)} \bigl[R'
\nabla_z (O_If) (z,a)\bigr] \,d\nu(z,a) =0,\nonumber
\end{eqnarray}
where the operators $O_I$ are defined in (\ref{eqOIf}). Note that
(\ref{eqbarprimedisag}) produces $2^J$ equations, one of which is trivial.
We refer to (\ref{eqbarprimedisag}) as BAR$_I$.

We first check that (\ref{eqbarprime}) yields the classical BAR for
the stationary distribution of
the reflected Brownian motion $Z$ when choosing
$f(z,a) \equiv g(z)$ for some smooth $g$. One readily checks that in
this case,
\[
(Of) (z,a)= \sum_{I \subseteq\{1,2, \ldots, J\}} \sum
_{S \subseteq{\{1,\ldots,J \}\setminus I} } (-1)^{|S|} g(\Pi _{S \cup I } z)= g(z).
\]
Substituting the above equation in (\ref{eqbarprime}), we
immediately obtain the well-known basic adjoint relationship
as introduced in \citet{harwil87a} for reflected Brownian motion,
%
\begin{equation}
\label{eqbar} \int_{\mathbb{R}^J_+ } L g (z) \,d\pi(z) + \int
_{\bigcup_iF_i} \bigl[R'\nabla_z g (z)\bigr]
\,d\nu(z) =0,
\end{equation}
where $ d\pi(z) = \int_{a \in\mathbb M^{J \times J}} \,d\pi(z,a)$ is the
stationary distribution for $Z$ and the Borel measure $d \nu(z)$ is
given by $d\nu(z)= \int_{a \in\mathbb M^{J \times J}} \,d\nu(z,a)$.

We next specialize (\ref{eqbarprime}) to the one-dimensional case,
and we verify that we recover Theorem~\ref{thmonedim}.
This shows in particular that (\ref{eqbarprime}) fully determines
$\pi$ if $J=1$.
Indeed, it is readily seen that
\[
(Of) (z,a)= (O_{\varnothing}f) (z,a) + (O_{\{1\}}f ) (z,a) = f(z, a)-
f(0,a)+f(0,0).
\]
Combining this with (\ref{eqbarprime}) gives (\ref{eq1dbar}),
but with $-\partial/\partial z f(0,0)\theta$ replaced with
$c \partial/\partial z f(0,0)$ for some constant $c=\nu(\{0,0\})>0$.
One can further show that $c=-\theta$, but we suppress the argument.

We next argue that none
of the $2^J-1$ nontrivial equations in (\ref{eqbarprimedisag}) can be dropped,
but we leave open the question whether they characterize $\pi$.
We do so by illustrating the interplay between the different BAR$_I$ in
a simple example.
Let $J=3$ and consider $Z=(Z_1,Z_2,Z_3)$, where $Z_1$, $Z_2$, and $Z_3$
are three independent one-dimensional
standard reflected Brownian motions.
We do not need the second argument $A$, and therefore we make no
distinction between (\ref{eqbarprimedisag})
and a ``classical'' analog of BAR$_I$ in (\ref{eqbarprimedisag}).
This classical analog is obtained by considering (\ref
{eqbarprimedisag}) for $f$ that do not depend on the second argument $a$;
cf.~how (\ref{eqbar}) was obtained from (\ref{eqbarprime}).
The process $Z$ has a unique stationary distribution $\pi$, which is a
product form; see, for example, \citet{harrisonwilliamsexponential1987} for details.
BAR$_{\{1,2\}}$ is equivalent with the third marginal distribution of
$\pi$ being exponential, with similar conclusions for
BAR$_{\{1,3\}}$ and BAR$_{\{2,3\}}$.
On the other hand, BAR$_\varnothing$ and BAR$_{\{j\}}$ for any $j\in\{
1,2,3\}$ contain no information on the marginal distributions, in the sense
that $O_\varnothing g = 0$ and $O_{\{j\}} g =0$ for functions of the form
$g(z) = f_1(z_1) +f_2(z_2)+f_3(z_3)$ (assuming appropriate smoothness).
Still, BAR$_{\{1\}}$ with BAR$_{\{1,2\}}$ and BAR$_{\{1,3\}}$ together
imply that the push-forward of $\pi$
under the projection map onto the last two coordinates has a product
form solution
since the two-dimensional reflected Brownian motion $(Z_2,Z_3)$
satisfies the so-called skew-symmetry condition;
see \citet{harrisonwilliamsexponential1987}, Theorem~6.1, and \citet{williamsskew1987}, Theorem~1.2.
Consequently, one can think of BAR$_{\{1\}}$ as describing the
dependencies between
the second and third components of $\pi$, with marginal distributions
determined
by BAR$_{\{1,2\}}$ and BAR$_{\{1,3\}}$, respectively.
Similarly, BAR$_\varnothing$ describes the dependencies of the three
two-dimensional push-forward measures of $\pi$.

\section{Characteristics of derivatives and proof of Theorem~\texorpdfstring{\protect\ref{thmasp}}{2}}\label{secproofthm2}
In this section, we prove Theorem~\ref{thmasp}.
We also collect additional sample path properties of derivatives,
with an emphasis on their jump behavior.
These properties will be used in the proof of Theorem~\ref{thmbarprimenew}.

Throughout this section, we work under the conditions of Theorem~\ref{thmasp}.
That is, we assume that $x\in C^J$ is given, and we write
$z=\Gamma(x)$, $y= \Phi(x)$, $a=\Gamma'(x)$ and $b=-\Phi'(x)$.
We also set $\chi(t)= t E $ for $t\ge0$.

\subsection{Complementarity}
This section connects the augmented Skorohod problem associated
with $(R,R)$ for $(x,\chi)$ with $(z,a)$.
Note that, in view of Definitions~\ref{defORM} and \ref{defderivative},
the first two requirements of the augmented Skorohod problem in
Definition~\ref{defaugskorohod}
are immediately satisfied for $(x,y,z)$.
It\vspace*{1pt} is immediate that $a=\chi- R b$ by definition of $a$, so we must
indeed choose $\widetilde R=R$.
We proceed with showing that $a$ and $b$ lie in $\mathbb{D}^{J\times
J}_+$ as required for
the augmented Skorohod problem,
but it is convenient to first establish part of the fourth requirement.

\begin{lemma} \label{lemgammaimonot}
The $\mathbb M^{J\times J}$-valued function $b$ is componentwise
nonnegative and nondecreasing.
\end{lemma}
\begin{pf}
Since $\chi(t)=tE$ for $t \ge0$, $\chi$ is evidently
nonnegative and nondecreasing. The monotonicity result in
Theorem~6 of \citet{kellawhitt96} shows that for any fixed $\epsilon
>0$, each
component of $b_{\epsilon}$ is nonnegative and nondecreasing.
The lemma follows from the fact that $b$ is the pointwise
limit of the sequences $\{b_{\epsilon}\}$ as $\epsilon
\rightarrow0+$.
\end{pf}

\begin{lemma} \label{lemAinD}
The $\mathbb M^{J\times J}$-valued functions $a$ and $b$ lie in
$\mathbb{D}_+^{J\times J}$.\vadjust{\goodbreak}
\end{lemma}
\begin{pf}
Since $b$ is nonnegative in view of Lemma~\ref{lemgammaimonot}, we
will have shown the claim for $b$ if we
verify that $b \in\mathbb{D}^{J\times J}$. We deduce from Theorem~1.1 in
\citet{Mandel-ramanan} that each component of $b$ is
upper semicontinuous and that it has left and right limits everywhere.
Since $b$ is nondecreasing by Lemma~\ref{lemgammaimonot},
these properties imply that $b\in\mathbb{D}^{J\times J}$.

We next show that $a \in\mathbb{D}_+^{J\times J}$. Clearly, since $b
\in\mathbb{D}_+^{J\times J}$,
we only need to show that $a$ is nonnegative.
Again by the monotonicity result in Theorem~6 of
\citet{kellawhitt96}, for any fixed $\epsilon>0$, each
component of $a_{\epsilon}$ is nonnegative.
This completes the proof of the lemma after letting $\epsilon\to0+$.
\end{pf}

We next investigate the fourth and fifth requirement
of Definition~\ref{defaugskorohod}.
To this end, we need a characterization of $b$
which relies heavily on \citet{Mandel-ramanan}.

\begin{lemma}
$b$ is the unique solution to the following system of equations: for
$i,j=1,\ldots,J$
and $t\ge0$,
\[
b^j_i(t) = \sup_{s \in{\Phi_{(i)}}(t)} \bigl[
\delta_{i}^j s + {\bigl[P'b^j
\bigr]_i}(s)\bigr],
\]
%
where the supremum over an empty set should be interpreted as zero
and 
%
\begin{eqnarray}
{\Phi_{(i)}}(t) &=&\bigl\{ s \in[0,t]\dvtx z_i(s) =
0\bigr\}. \label{eqPhij}
\end{eqnarray}
\end{lemma}
\begin{pf}
We use Theorem~1.1 of \citet{Mandel-ramanan}, which can be simplified
in view of
Lemma~\ref{lemgammaimonot} and the nonnegativity of the matrix $P$.
This theorem states that
%
\begin{equation}
\label{eqgamma} b^j_i(t) = \cases{ 0, &\quad if $t
\in(0,t_{(i)})$,
\cr
\displaystyle \sup_{s \in{\Psi_{(i)}}(t)} \bigl[
\delta_{i}^j s + {\bigl[P'b^j
\bigr]_i}(s)\bigr], &\quad if $t \in[t_{(i)},\infty)$,}
\end{equation}
where
$t_{(i)}= \inf\{ t \ge0\dvtx {z_i}(t) = 0\}$
and
$\Psi_{(i)}(t) =\{ s \in[0,t]\dvtx z_i(s) = 0,y_i (s) = y_i(t)\}$.
Observe that, again using Lemma~\ref{lemgammaimonot},
the supremum must be attained at the
rightmost end of the closed interval $\Psi_{(i)}(t)$.
Since $y$ is nondecreasing and $\int_{0}^{t} z_i(s) \,dy_i(s)=0$, this
is also the rightmost point of the closed set $\Phi_{(i)}(t)$.
This establishes the lemma in view of the convention used for the
supremum of an empty set.
\end{pf}

\begin{lemma} \label{lemZgammacomp}
Fix any $j=1, \ldots, J$, and we have
%
\begin{equation}
\label{eqZgamma} \int_0^\infty z(t) \,d
b^j(t)=0.
\end{equation}
\begin{pf}
Fix some $i=1,\ldots,J$.
Note that if $z_i(t)>0$ at time $t$, we deduce from the path
continuity of $z$ that there exists some $\epsilon>0$ such that
$z_i(s)>0$ for $s \in(t-\epsilon, t+ \epsilon)$. This implies that
$\Phi_{(i)}(s)$ is constant as a set-valued function for $s \in
(t-\epsilon, t+\epsilon)$. Thus $b_i(s)$ is constant for $s
\in(t-\epsilon, t+\epsilon)$ by (\ref{eqgamma}). Since $i$ is
arbitrary, this yields (\ref{eqZgamma}).
\end{pf}
\end{lemma}

\begin{lemma} \label{lemboundary}
If $z_i(t)=0$ for some $i$, then we have $a_i (t)=0$.
\end{lemma}
\begin{pf}
Suppose $z_i(t)=0$.
In view of Lemma~\ref{lemgammaimonot}, we deduce from (\ref
{eqgamma}) that, for any $j=1,\ldots,J$,
\[
b^j_i(t) = \delta_i^j t +
\bigl[P'b^j\bigr]_i(t).
\]
Now it follows from (\ref{eqAt}) and $R=E-P'$ that
\[
a^j_i (t)= \delta_i^j t-
\bigl[Rb^j\bigr]_i(t) = \delta_i^j
t-b^j_i(t) + \bigl[P'b^j
\bigr]_i(t) =0,
\]
which completes the proof of the lemma.
\end{pf}

The above two lemmas together with Lemma~\ref{lemAinD} yield two
further complementarity conditions.

\begin{corollary}
For any $j=1, \ldots, J$, we have
%
\begin{eqnarray}\label{eqAy}
\int^\infty a^j (t) \,dy(t)&=&0,
\qquad
\int_0^\infty a^j (t) \,d
b^j(t)=0.
\end{eqnarray}
\end{corollary}

\begin{pf*}{Proof of Theorem \ref{thmasp}}
The claim is now immediate from (\ref{eqAi}) in conjunction with
Lemmas~\ref{lemgammaimonot}, \ref{lemAinD}, \ref{lemZgammacomp}
and \ref{lemboundary}.
\end{pf*}

\subsection{Jumps of $a$}
In this section, we collect sample path properties of $a$
related to its jump behavior.
This plays a critical role in the derivation of Theorem~\ref{thmbarprimenew} and Corollary~\ref{corbarprime}.

The next lemma states that $a$ is linear whenever $z$ is in the interior of~$\mathbb{R}_+^J$.

\begin{lemma}
\label{lemjumptime} If $z(t) \in\mathbb{R}_{+}^J \setminus
\partial
\mathbb{R}_{+}^J$ for $t \in[\alpha,\beta]$, then we have for $t
\in[\alpha,\beta]$
\[
a(t)= a(\alpha) + (t-\alpha) E.
\]
In particular, $a$ is continuous on $(\alpha,\beta)$ and can only have
jumps when \mbox{$z \in\partial\mathbb{R}_{+}^J$}.
\end{lemma}

\begin{pf}
In view of (\ref{eqAt}), it suffices to show that $b$ is
constant for $t \in[\alpha,\beta]$. Since $z(t) \in\mathbb{R}_{+}^J
\setminus\partial\mathbb{R}_{+}^J$ for $t \in[\alpha,\beta]$, we
obtain from~(\ref{eqPhij}) that for each $i=1, \ldots, J$, $\Phi_{(i)}(t)$ is
constant as a set-valued function. Therefore, we deduce from~(\ref{eqgamma}) that $b(t) $ is a constant in $\mathbb M^{J \times
J}$ for $t \in[\alpha,\beta]$. The proof of the lemma is complete.
\end{pf}

For any function $g$ on $\mathbb{R}_+$, we write $\Delta g(t)=g(t)-g(t-)$.
In view of the above lemma, we can characterize the continuous part
of the function $a$. Formally, we write
\[
a(t)=a^c(t)+a^d(t),
\]
where
\[
a^d(t)= \sum_{s \le t} \Delta a(s).
\]
We have the following corollary.

\begin{corollary} \label{corAct}
$a^c(t)=a(0)+tE$ for any $t \ge0$.
\end{corollary}

We next characterize the jump direction of $a$ when a jump occurs.
%
\begin{lemma}
\label{lemjumpdir}
Fix a nonempty set $I \subseteq\{1, 2, \ldots, J\}$ and some $t> 0$.
Suppose that $z_k (t)=0$ for $k \in I$ and
$z_i(t)>0$ for $i \notin I$. If $\Delta a(t)\neq0$, then we must have
\[
\Delta a(t) = -\sum_{k \in I} R^k [\Delta
b]_k(t).
\]
\end{lemma}
\begin{pf}
Since $z_i(t)>0$ for $i \notin I$, we deduce from the sample path
continuity of $z$ that there exists some $\epsilon>0$ such that for
$i \notin I$, $z_i(s)>0$ for $s \in(t-\epsilon, t]$.
This yields that for $i \notin I$, $\Phi_{(i)}(s)$ is a constant as a
set-valued
function for $s \in(t-\epsilon, t]$. From (\ref{eqgamma}) we infer
that for $i \notin I$, $b_i(s)$ is constant for $s \in
(t-\epsilon, t]$. This implies that $[\Delta b]_i(t)=0$
for $i \notin I$, and therefore that
\[
\Delta a(t)= -R \Delta b(t) = -\sum_{k=1}^J
R^k [\Delta b]_k(t) =-\sum
_{k\in I} R^k [\Delta b]_k(t).
\]
This completes the proof of the lemma.
\end{pf}


\section{A basic adjoint relationship and proof of Theorem~\texorpdfstring{\protect\ref{thmbarprimenew}}{3}}\label{secproofthm3}
This section is devoted to the proof of Theorem~\ref{thmbarprimenew}
and Corollary~\ref{corbarprime}. The key
idea is to apply Ito's formula to the semimartingale $(Z,A)$ and use
sample path properties of $(Z,A)$ to analyze the stationary measure.
This is a standard approach in the context of reflected Brownian
motion, but
the analysis here exposes new features due to the presence of jumps in
the process $A$.
Throughout, we work with the augmented filtration generated by $W$ and
$(Z(0),A(0))$.

\subsection{Ito's formula for the semimartingale $(Z,A)$}
In this section, we apply Ito's formula to the semimartingale $(Z,A)$.
We first show that $(Z,A)$ is a semimartingale, that is, each of its
components is a semimartingale. Recall that a semimartingale is
an adapted process which is the sum of\vadjust{\goodbreak} a local martingale and
a
finite variation process, with sample paths in $\mathbb{D}$.
For more detail, we refer readers to \citet{protterbook}, Chapter~3, or
\citet{jacodshiryaevlimit2003}, Chapter~1.
%
\begin{lemma}
$(Z,A)$ is a semimartingale.
\end{lemma}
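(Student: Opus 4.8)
The plan is to handle the two components separately, since each is of a rather different nature. First I would recall that $Z=\Gamma(X)$ where $X(t)=Z(0)+\int_0^t\theta(Z(s))\,ds+\int_0^t\sigma(Z(s))\,dW(s)$; the process $X$ is manifestly a semimartingale (a continuous local martingale plus a continuous finite-variation drift), and $Z=X+RY$, so it remains to see that $Y$ has sample paths of finite variation. This is immediate from requirement 2 in Definition~\ref{def:ORM} (equivalently Definition~\ref{def:augskorohod}): $Y$ is componentwise nondecreasing with $Y(0)=0$, hence each component is of finite variation, adapted (the oblique reflection map is adapted, as recorded in the SDER construction references), and has paths in $\D^J$. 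Therefore $Z$ is a continuous semimartingale. Since $Z$ is continuous, $Y$ is in fact continuous as well, but we do not need that here.

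For $A$ I would use the representation $A(t)=\chi(t)-\tilde R B(t)=A(0)+Et-\tilde R B(t)$ from requirement 3 of the augmented Skorohod problem. The term $A(0)+Et$ is a deterministic finite-variation (indeed $C^1$) path, so everything reduces to showing $B$ is a finite-variation adapted process with paths in $\D^{J\times J}$. Finite variation and the $\D^{J\times J}$-regularity are supplied by requirement 4 together with Lemma~\ref{lem:AinD}: each column $B^j$ is componentwise nonnegative, nondecreasing, with $B(0)=0$, and $B\in\D_+^{J\times J}$. Adaptedness of $B$ (and hence of $A$) follows because $(Z,Y,A,B)$ is the unique solution of the augmented Skorohod problem driven by $(X,\chi)$, and $X$ is adapted to the augmented filtration generated by $W$ and $(Z(0),A(0))$ under which we are working; the augmented Skorohod map is a measurable (indeed adapted) functional of its input, as guaranteed by the construction in \cite{Mandel-ramanan} cited for existence and uniqueness. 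Combining, $A$ is the sum of a continuous finite-variation process and a nondecreasing (hence finite-variation) $\D^{J\times J}$-valued process, so $A$ is a semimartingale — in fact a pure finite-variation semimartingale, with no martingale part.

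Putting the pieces together, every component of $(Z,A)$ is a semimartingale, which is the definition of $(Z,A)$ being a semimartingale. I do not expect any real obstacle here; the only point requiring a word of care is adaptedness of $Y$ and $B$, which I would dispatch by invoking the adaptedness of the (augmented) Skorohod map established in the references used for the SDER construction and for Theorem~\ref{thm:asp}, rather than reproving it. The genuinely substantive work — decomposing $A$ into its continuous part $A^c(t)=A(0)+tE$ and its jump part, and controlling the jumps — has already been done in Corollary~\ref{cor:Act} and Lemmas~\ref{lem:jumptime} and \ref{lem:jumpdir}, and will be exploited in the subsequent application of Ito's formula rather than in this lemma.
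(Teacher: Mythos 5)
Your proposal is correct and follows essentially the same route as the paper: adaptedness and $\D$-path regularity of $(Z,A)$, $Z$ a semimartingale from the SDER, and $A=\chi-\tilde R B$ of finite variation because $B$ is nondecreasing (the paper invokes Lemma~\ref{lem:gammaimonot} and (\ref{eq:At}) for this, you invoke requirements 3--4 of Definition~\ref{def:augskorohod}, which is the same point). The only cosmetic difference is that you spell out the decomposition $Z=X+RY$ explicitly, whereas the paper simply cites that $Z$ is a semimartingale as well known.
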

\begin{pf}
The process $(Z, A)$ is adapted. This is a well-known property of $Z$,
and $A(t)$ is a deterministic functional of
$\{Z(s)\dvtx 0\le s\le t\}$ and $A(0)$ since it arises from an augmented
Skorohod problem.
We know from Lemma \ref{lemAinD} that each component of the
process $(Z,A)$ lies in $\mathbb{D}$. Since $Z$ is a semimartingale,
to show
$(Z,A)$ is a semimartingale, it suffices to show that $A$ is a
semimartingale.
In fact, from Lemma~\ref{lemgammaimonot} and (\ref{eqAt}) we immediately
deduce that $A$ is a finite variation process, that is, the paths of
$A$ are almost surely of finite
variation on $[0, T]$ for any $T>0$. In particular, $A$ is a semimartingale.
\end{pf}

By Ito's formula, for example, \citet{jacodshiryaevlimit2003}, Section~I.4,
we deduce from (\ref{eqSDER}) that for any
$f\in C_b^2(\mathbb{R}^J_+\times\mathbb M^{J\times J})$, we have
%
\begin{eqnarray}\label{eqito}\label{ito}
f\bigl({Z}(t),A(t)\bigr) &=& f\bigl({Z }(0),{A}(0)\bigr)\nonumber
\\
&&{}  + \int _0^t \bigl[\sigma\bigl(Z(s)\bigr)'{
\nabla_z} f\bigl({Z}(s),A(s-)\bigr) \bigr] \,dW(s)\nonumber
\\
&&{}+ \int_0^t \bigl[R'
\nabla_z f\bigl(Z(s),A(s-)\bigr)\bigr] \,dY(s)
\nonumber\\[-8pt]\\[-8pt]
&&{} +\int_0^t Lf\bigl(Z(s),A(s-)\bigr) \,ds\nonumber
\\
&&{} + \int _0^t {\nabla_a} f\bigl({Z}(s),A(s-)
\bigr)\,dA^c(s)
\nonumber
\\
&&{}+ \sum_{s \le t} \bigl[f\bigl({Z}(s),A(s)\bigr) - f
\bigl({Z}(s),A(s-)\bigr)\bigr]. \nonumber
\end{eqnarray}
Compared to the formulation in
Theorem I.4.57 of \citet{jacodshiryaevlimit2003}, we have absorbed
the last sum of the jump part into the integral
\[
\int_0^t {\nabla_a}
f\bigl({Z}(s),A(s-)\bigr)\,dA^c(s).
\]
This is justified by noting that,
since $\Delta A(s) = -\widetilde R\Delta B(s)$ for some nonnegative and
(componentwise)
nondecreasing process $B$ according to Definition~\ref{defaugskorohod},
%
\begin{equation}
\label{eqboundjumpsA} \sum_{s\le t} \bigl\|\Delta A(s)
\bigr\|_1 \le C \sum_{s\le t} \bigl\|\Delta B(s)
\bigr\|_1 = C \bigl\|B(t)\bigr\|_1<\infty,
\end{equation}
where $C$ denotes some constant depending on $\widetilde R$.
Note that this also implies that the last term on the right-hand side
of (\ref{eqito}) is absolutely convergent.
Indeed, combining the above bound with $f\in C_b^2(\mathbb
{R}^J_+\times\mathbb M^{J\times J})$
yields $\sum_{s \le t} |f({Z}(s),A(s)) - f({Z}(s),A(s-))|<\infty$.

Suppose that $(Z,A)$ is positive
recurrent and has a unique stationary distribution~$\pi$.
Henceforth we assume that $(Z(0),A(0))$ has distribution $\pi$,
and we write $\mathbb{E}_\pi$ instead of $\mathbb{E}$. After taking
an expectation with respect to $\pi$ on both sides of (\ref{eqito}),
the term involving $dW$ vanishes since
it is a martingale term.
We next analyze the second to last term on the right-hand side.
From Corollary \ref{corAct} and the fact that $A$ has countably many jumps
(Lemma~\ref{lemgammaimonot}), we deduce that
\begin{eqnarray*}
\mathbb{E}_{\pi} \int_0^t {
\nabla_a} f\bigl({Z}(s),{A}(s-)\bigr) \,dA^c(s) &=&
\mathbb{E}_{\pi} \int_0^t {
\nabla_a} f\bigl({Z}(s),{A}(s-)\bigr) \,d (s E )
\\
& = & \mathbb{E}_{\pi} \int_0^t {
\nabla_a} f\bigl({Z}(s),{A}(s)\bigr) \,d (s E )
\\
& = &\mathbb{E}_{\pi} \int_0^t
\operatorname{tr}\bigl({\nabla_a} f\bigl({Z}(s),{A}(s)\bigr)\bigr) \,d
s.
\end{eqnarray*}
Since $ f\in C^2_b(\mathbb{R}^J_+\times\mathbb M^{J\times J})$, we
have from
Fubini's theorem and the definition of stationarity in (\ref
{eqdefstationary}) that
\begin{eqnarray*}
\mathbb{E}_{\pi} \int_0^t
\operatorname{tr}\bigl({\nabla_a} f\bigl({Z}(s),{A}(s)\bigr)\bigr) \,d s
&=& \int_0^t \mathbb{E}_{\pi}
\operatorname{tr}\bigl({\nabla_a} f\bigl({Z}(s),{A}(s)\bigr)\bigr) \,d s
\\
&=& t \int\operatorname{tr}\bigl(\nabla_a f(z,a)\bigr) \,d \pi(z,a).
\end{eqnarray*}
Thus we obtain
\[
{\mathbb{E}_{\pi}}\int_0^t {
\nabla_a} f\bigl({Z}(s),{A}(s-)\bigr)\,dA^c(s) = t \int
\operatorname{tr}\bigl(\nabla_a f(z,a)\bigr) \,d \pi(z,a).
\]
A similar argument applies to the fourth term on the right-hand side of
(\ref{eqito}).
We conclude that, for each $t \ge0$ and each $f\in C^2_b(\mathbb
{R}^J_+\times\mathbb M^{J\times J})$,
%
\begin{eqnarray}
\label{eqitopre}
\qquad 0 &=& t \int{ \bigl[ Tf(z,a) \bigr]} \,d\pi(z,a) +
\mathbb{E}_{\pi} \int_0^t
\bigl[R' \nabla_z f\bigl(Z(s),A(s-)\bigr)\bigr] \,dY(s)
\nonumber\\[-8pt]\\[-8pt]
&&{} + {\mathbb{E}_\pi} \sum_{s \le t} {
\bigl[f\bigl({Z}(s),A(s)\bigr) - f\bigl({Z}(s),A(s-)\bigr)\bigr]},\nonumber
\end{eqnarray}
where $T$ is given in (\ref{eqdefT}).
This equation serves as the starting point for proving Theorem~\ref{thmbarprimenew}.

\subsection{The boundary term}\label{secboundaryterm}
In this section we rewrite the boundary term in
(\ref{eqitopre}), that is, the term involving $dY$. Let $\nu=(\nu
_1,\ldots,\nu_J)$ be the unique
vector of measures on $\partial\mathbb{R}^J_+\times\mathbb
M^{J\times J}$ for
which
\[
\int h(z,a) \nu(dz,da) = \mathbb{E}_\pi\int_0^1
h\bigl(Z(s),A(s-)\bigr)\,dY(s)
\]
for all continuous $h\dvtx \partial\mathbb{R}^J_+\times\mathbb M^{J\times
J}\rightarrow\mathbb{R}^J$ with compact support.
This is a well-defined measure by the following lemma.
For a different proof in the reflected Brownian motion case, see \citet{harwil87a}, Section~8.
%
\begin{lemma}
We have $\mathbb{E}_\pi Y(1)<\infty$ componentwise.
\end{lemma}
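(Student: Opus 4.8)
The plan is to establish the bound by taking expectations in the Skorohod decomposition $Z(t) = X(t) + RY(t)$ and exploiting the invertibility of $R$ together with the $L^2$ moment assumptions on $(Z(0),A(0))$ carried over from the hypotheses of Theorem~\ref{thm:barprimenew}. Since $R = E-P$ with $P$ nonnegative of spectral radius less than one, $R^{-1} = \sum_{k\ge0} P^k$ is a well-defined nonnegative matrix, so componentwise $Y(1) = R^{-1}(Z(1) - X(1))$. Because $R^{-1}\ge 0$ entrywise and all quantities of interest will be controlled in $L^1$, it suffices to bound $\E_\pi\|Z(1)\|_1$ and $\E_\pi\|X(1)\|_1$ separately; the first is immediate from stationarity and the moment assumption $\int\|z\|_2^2\,\pi(dz,da)<\infty$, and the second follows from $X(1) = Z(0) + \int_0^1\theta(Z(s))\,ds + \int_0^1\sigma(Z(s))\,dW(s)$ together with the linear growth bound $\|\theta(x)\|_2^2 + \|\sigma(x)\|_2^2 \le K(1+\|x\|_2^2)$.

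Concretely, I would first recall from the construction of the SDER that $\E\|Z(t)\|_2^2$ is locally bounded in $t$ (stated in the excerpt); under stationarity this is just the finite constant $\int\|z\|_2^2\,\pi(dz,da)$, uniformly in $t$. Then estimate $\E_\pi\|X(1)\|_2 \le \E_\pi\|Z(0)\|_2 + \int_0^1\E_\pi\|\theta(Z(s))\|_2\,ds + \E_\pi\bigl\|\int_0^1\sigma(Z(s))\,dW(s)\bigr\|_2$, bounding the drift term via Cauchy--Schwarz and the growth condition, and the stochastic integral term via the It\^o isometry, $\E_\pi\bigl\|\int_0^1\sigma(Z(s))\,dW(s)\bigr\|_2^2 = \int_0^1 \E_\pi\|\sigma(Z(s))\|_2^2\,ds \le \int_0^1 K(1+\E_\pi\|Z(s)\|_2^2)\,ds < \infty$, again using stationarity. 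Combining, $\E_\pi\|X(1)\|_2<\infty$, hence $\E_\pi\|Y(1)\|_1 \le \|R^{-1}\|_1\bigl(\E_\pi\|Z(1)\|_1 + \E_\pi\|X(1)\|_1\bigr) < \infty$, which gives the claim (and in particular $\E_\pi Y(1)<\infty$ componentwise since $Y$ is nonnegative).

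The main obstacle, such as it is, is purely bookkeeping: one must be careful that all the expectations invoked are genuinely finite \emph{before} manipulating them, i.e., that the stochastic integral is a true martingale and not merely a local martingale when its integrand is evaluated along $Z$. This is handled by the locally-bounded-second-moment property of $Z$ together with the linear growth of $\sigma$, which makes $\int_0^1\E_\pi\|\sigma(Z(s))\|_2^2\,ds$ finite and hence legitimizes the It\^o isometry step; no localization argument beyond a standard stopping-time truncation (followed by monotone convergence, using $Y\ge 0$) is required. An alternative, slightly slicker route avoiding even this is to apply the already-established It\^o formula (\ref{eq:ito}) with $f(z,a) = \ell' z$ for a suitable positive left-eigenvector-type linear functional $\ell$ with $\ell' R \ge \mathbf{1}^\top$ componentwise (such $\ell$ exists since $R$ is a nonsingular $M$-matrix), take expectations, and read off $\E_\pi[\ell' RY(1)] = \ell'\bigl(\E_\pi Z(1) - \E_\pi Z(0)\bigr) - \E_\pi\int_0^1 \ell'\theta(Z(s))\,ds = -\int \ell'\theta(z)\,\pi(dz,da) < \infty$, whence $\E_\pi\|Y(1)\|_1 \le \E_\pi[\ell'RY(1)] < \infty$; I would present whichever of the two is shorter in context, but both rest on exactly the same ingredients.
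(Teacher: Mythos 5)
Your proposal is correct and follows essentially the same route as the paper: both express $RY(1)=Z(1)-Z(0)-\int_0^1\theta(Z(s))\,ds-\int_0^1\sigma(Z(s))\,dW(s)$ from the SDER and bound it using the stationarity/moment assumption on $\pi$ (equivalently, local boundedness of $\E\|Z(t)\|_2^2$), the linear growth conditions, and the It\^o isometry, then recover $Y(1)$ from $RY(1)$ via the nonsingularity of $R$ (the paper bounds $\E_\pi\|RY(1)\|_2^2$ directly rather than passing through $R^{-1}\ge 0$, a negligible difference).
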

\begin{pf}
Since $Y(1)\ge0$, it is enough to show that $\mathbb{E}_\pi\|RY(1)\|
_1<\infty$.
We prove the stronger statement that $\mathbb{E}_\pi\| RY(1)\|
_2^2<\infty$.
From the fact that $Z$ satisfies the SDER (\ref{eqSDER}), we obtain
\[
\mathbb{E}_\pi\bigl\llVert R Y(1)\bigr\rrVert _2^2
= \mathbb{E}_\pi\biggl\llVert Z(1) - Z(0) - \int_0^1
\theta\bigl(Z(s)\bigr)\,ds - \int_0^1 \sigma
\bigl(Z(s)\bigr) \,dW(s)\biggr\rrVert _2^2.
\]
It follows from the fact that $t\mapsto\mathbb{E}_\pi\|Z(t)\|_2^2$
is locally bounded and
the growth condition on $\theta$ that $\mathbb{E}_\pi\llVert \int_0^1\theta(Z(s))\,ds\rrVert _2^2<\infty$.
Similarly, we have
\[
\mathbb{E}_\pi\biggl\llVert \int_0^1
\sigma\bigl(Z(s)\bigr) \,dW(s)\biggr\rrVert _2^2 =
\mathbb{E}_\pi\int_0^1
\operatorname{tr}\Sigma\bigl(Z(s)\bigr) \,ds <\infty,
\]
where the finiteness follows from the growth condition on $\sigma$.
\end{pf}

Our next goal is to give a characterization of measure $\nu$ in
terms of $\pi$, which we carry out through Laplace transforms. We
start with
determining the support of $\nu$.

\begin{lemma}
The support of $\nu$ is $\bigcup_i(F_i\cap F^a_i)$.
\end{lemma}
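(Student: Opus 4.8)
The plan is to show two inclusions. First, $\operatorname{supp}\nu\subseteq\bigcup_i(F_i\cap F_i^a)$: since $\nu$ is defined through integration against $dY$ and $Y$ increases only when $Z$ is on the boundary, $\nu$ is concentrated on $\partial\R_+^J\times\M^{J\times J}$, so it suffices to establish that $\nu$ assigns no mass to points $(z,a)$ with $z_i=0$ but $a_i\neq 0$. Component $Y_i$ increases only at times $t$ with $Z_i(t)=0$; by requirement 5 of the augmented Skorohod problem (Lemma~\ref{lem:boundary}), $Z_i(t)=0$ forces $A_i(t)=0$. The only subtlety is that the integrand in the definition of $\nu$ is evaluated at $A(s-)$ rather than $A(s)$, so I would argue that at any time $s$ with $dY_i(s)>0$ one also has $A_i(s-)=0$: indeed, if $Z_i(s)=0$ on a time interval to the left of $s$ (which is where $Y_i$ can charge mass, up to a countable set handled by the jump bound), then $A_i(s-)=0$ by Lemma~\ref{lem:boundary} applied just before $s$ and right-continuity of $A$ from the left limit; alternatively, since $Z$ spends zero Lebesgue time on $\partial\R_+^J$, one passes to a dense set of approach times. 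Hence $\nu$ charges only $F_i\cap F_i^a$ on the event that $Y_i$ increases, giving the inclusion.

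For the reverse inclusion, I would show that every point of each $F_i\cap F_i^a$ lies in the support, i.e. $\nu$ is not supported on a strictly smaller closed set. The natural approach is to relate $\nu$ to the stationary distribution $\pi$ and its boundary behavior: by positive recurrence, $Z$ visits every boundary face $\{z_i=0\}$, and on those visits $Y_i$ genuinely increases (the complementarity $\int z_i\,dY_i=0$ together with $Z$ not being absorbed forces $Y_i$ to be strictly increasing on a set of positive measure of such visit times), so the pushforward of the occupation-type measure $\E_\pi\int_0^1 \mathbf 1_{\{\cdot\}}dY_i(s)$ has full support on $\{z_i=0\}$ in the $z$-coordinate. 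In the $a$-coordinate, since $A_i(s-)=0$ is forced but the other components of $A(s-)$ range over all reachable values as $s$ varies over boundary-visit times, the support in $a$ is exactly $\{a:a_i=0\}$ intersected with the reachable set, which is $F_i^a$ (or its relevant slice). Combining over $i$ yields $\operatorname{supp}\nu=\bigcup_i(F_i\cap F_i^a)$.

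The main obstacle is the reverse inclusion, specifically the claim that the $a$-marginal of $\nu$ restricted to $\{z_i=0\}$ genuinely fills out all of $F_i^a$ rather than some thin subset; this requires an irreducibility/reachability argument for the joint process $(Z,A)$ on the boundary, using that $A$ has positive unit drift in every diagonal direction between jumps and jumps only when $Z$ hits lower faces, so $A$ can be steered to a neighborhood of any target in $\{a_i=0\}$ while $Z$ stays near a point of $\{z_i=0\}$. I would lean on the hypothesized uniqueness of the stationary distribution $\pi$ (hence a form of Harris recurrence) to make this precise, and on Lemma~\ref{lem:jumpdir} and Lemma~\ref{lem:jumptime} to control the geometry of the jumps. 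The first inclusion, by contrast, is essentially a direct consequence of the complementarity conditions and the zero-Lebesgue-time property already assumed, modulo the routine $A(s-)$ versus $A(s)$ bookkeeping.
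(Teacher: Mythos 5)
Your containment direction is essentially the paper's argument: the lemma is proved there by combining the two complementarity conditions $\int_0^\infty Z(t)\,dY(t)=0$ and $\int_0^\infty A^j(t)\,dY(t)=0$ (equation (\ref{eq:Ay}), which comes from requirement 5 via Lemma~\ref{lem:boundary}) with the observation that $A$ has at most countably many jumps while the measure $dY$ is atomless, so that $A(s-)$ may be replaced by $A(s)$ for $dY$-almost every $s$. Your pointwise handling of the $A(s-)$ issue, however, is not sound as stated: the claim that $dY_i$ charges mass only where $Z_i(s)=0$ on an interval to the left of $s$ is false for diffusion paths (the zero set of $Z_i$ typically contains no intervals), and the appeal to zero Lebesgue time on the boundary is beside the point since $dY_i$ is mutually singular with Lebesgue measure. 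The fix is exactly the countable-jumps-plus-atomless-$dY$ argument you mention in passing, which is what the paper uses.

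The reverse inclusion is where your proposal genuinely breaks down, but it is also not needed. The paper's proof establishes (and the rest of the paper uses) only that $\nu$ is concentrated on $\bigcup_i(F_i\cap F_i^a)$, which is what justifies writing the boundary integral in (\ref{eq:barprimenew}) over that set; no claim that every point of $F_i\cap F_i^a$ is charged is ever proved or invoked. Your sketch for that direction---a reachability/irreducibility argument showing the $a$-marginal fills out all of $F_i^a$ while $Z$ sits near an arbitrary point of $\{z_i=0\}$---is not carried out and would require substantial new work (a Harris-type argument for the joint process $(Z,A)$, control of the jump geometry via Lemma~\ref{lem:jumpdir}, etc.); it is also doubtful in general, since the set of $a$-values reachable under the stationary dynamics need not exhaust $\{a: a_i=0\}$. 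So treat the lemma as a statement that $\nu(\,(\bigcup_i(F_i\cap F_i^a))^c\,)=0$ and drop the second half of your argument.
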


\begin{pf}
In view of Lemma~\ref{lemAinD}, it is clear that $A$ can have at most
countably many jumps.
For any continuous $h\dvtx \partial\mathbb{R}^J_+\times\mathbb M^{J\times
J}\rightarrow\mathbb{R}^J$ with compact
support, we have
\[
\int_0^1 h\bigl(Z(s),A(s-)\bigr)\,dY(s)= \int
_0^1 h\bigl(Z(s),A(s)\bigr)\,dY(s),
\]
since the measure $dY$ is continuous
and the integrand has countably many jumps by Lemma~\ref{lemgammaimonot}.
It follows from the definition of $\nu$ that
%
\begin{equation}
\label{eqboundaryterm} \int h(z,a) \nu(dz,da) = \mathbb{E}_\pi\int
_0^1 h\bigl(Z(s),A(s)\bigr)\,dY(s).
\end{equation}
The complementarity conditions $\int_0^\infty Z(t) \,dY(t)=0$ and (\ref
{eqAy}) imply the lemma.\vadjust{\goodbreak}
\end{pf}

On combining equations~(\ref{eqitopre}) and (\ref{eqboundaryterm})
we obtain that for any $f \in\break  C^2_b(\mathbb{R}^J_+\times\mathbb
M^{J\times J}_+)$,
%
\begin{eqnarray}
\label{eqbarprimepre}
\qquad 0&=&\int_{\mathbb{R}^J_+ \times\mathbb M_+^{J\times J}} \bigl[ T f(z,a)\bigr] \,d\pi(z,a)
+\int_{\bigcup_i(F_i\cap F^a_i)} \bigl[R'\nabla_z f(z,a)
\bigr] \,d\nu(z,a)
\nonumber\\[-8pt]\\[-8pt]
&&{}+ \frac{1}{t}{\mathbb{E}_\pi} \sum
_{s \le t} {\bigl[f\bigl({Z}(s),A(s)\bigr) - f\bigl({Z}(s),A(s-)
\bigr)\bigr]}.\nonumber
\end{eqnarray}
We now express the Laplace transform of $\nu$ in terms of the
Laplace transform of~$\pi$. Set $f(z,a) = \exp( - \eta'z -
\langle\alpha, a\rangle_{\mathrm{HS}}) \in C_b^2(\mathbb
{R}^J_+\times\mathbb M^{J\times J}_+) $ where
$(\eta, \alpha)\in\mathbb{R}^J_+\times\mathbb M^{J\times J}_+$.
After substituting
$f$ in (\ref{eqbarprimepre}), we obtain
%
\begin{equation}
\label{eqlaplacebarp} \pi^* ( \eta,\alpha) - \sum_{j=1}^J
\bigl(R'\eta\bigr)_j \nu_j^* ( \eta,\alpha)
+ H ( \eta,\alpha)=0,
\end{equation}
where
\begin{eqnarray*}
\pi^* ( \eta,\alpha)&=& \int_{\mathbb{R}^J_+ \times\mathbb
M^{J\times J}}
\Biggl[\frac{1}2 \eta'\Sigma(z)\eta+\eta'
\theta (z)-\sum_{i=1}^J
\alpha_i \Biggr]{{e^{ -
\eta\cdot z - \alpha\cdot a }}\,d\pi(z,a)},
\\
\nu_j^* ( \eta,\alpha)&=& \int_{F_j\cap F^a_j}
{{e^{ - \eta
\cdot z - \alpha\cdot a }}\,d\nu_j (z,a)},
\\
H( \eta,\alpha)&=&{\mathbb{E}_\pi} \sum
_{s \le1} {\bigl[e^{- \eta
\cdot
Z(s)} \cdot\bigl( e^{-\alpha\cdot A(s) }-e^{-\alpha\cdot A(s-)}
\bigr)\bigr]}.
\end{eqnarray*}
Dividing (\ref{eqlaplacebarp}) by $\eta_j>0$ and letting $\eta_j
\rightarrow\infty$, we deduce that
%
\begin{equation}
\label{eqlapnu} \nu_j^* ( \eta,\alpha)= \frac{1}2 \lim
_{\eta_j \to\infty} {\eta_j} \int_{\mathbb{R}^J_+ \times\mathbb M^{J\times J}}
\Sigma_{jj}(z) {{e^{ - \eta\cdot z - \alpha\cdot a }}\,d\pi(z,a)},
\end{equation}
where we have used the fact that $\nu_j(F_j \cap F_i)=0$ for $i \ne
j$ so that\break  $ \lim_{\eta_j \to\infty}
\nu_i^* (\eta,\alpha)=0$ by\vspace*{1pt} the dominated convergence theorem.
Since all terms in (\ref{eqlaplacebarp}) vanish in the limit by
dominated convergence except for
the term with $\nu^*_j$ and the term with $\pi^*$,
existence of the limit in (\ref{eqlapnu}) follows immediately from
the fact that $\nu_j(\eta,\alpha)$
does not depend on $\eta_j$.
Under further regularity conditions on $\pi$, one can use the initial
value theorem
for Laplace transforms to show that $d \nu_j= \frac{1}{2} \Sigma
_{jj} \,d \pi_j$
for an appropriate restriction $\pi_j$ of $\pi$.
Carrying out this procedure provides little additional insight, and we
therefore suppress further details.

\subsection{The jump term}
We now proceed investigating the jump term, that is, the term in
(\ref{eqitopre}) involving the countable sum.
Lemma~\ref{lemjumptime} implies that jumps in $A$ can only occur when
$Z$ lies hits the boundary $\partial\mathbb{R}^J_+$
of the nonnegative orthant, which motivates the following definition.
For $I\subseteq\{1,\ldots,J\},\break  I\neq\varnothing$, we define measures $u_I$
on $\mathbb{R}^{|I^c|}_+\times\mathbb M^{J\times J}_+$ with support
in $(0,\infty)^{|I^c|}\times\mathbb M^{J\times J}_+$.
We set, for Borel sets $G\subseteq(0,\infty)^{|I^c|}, C\subseteq
\mathbb M^{J\times J}_+$,
\[
u_I(G,C) = \mathbb{E}_\pi\sum
_{s\le1\dvtx  Z_I(s)=0, Z_{I^c}(s)\in G,
A(s)\neq A(s-)} 1_C \bigl\{ A(s-)\bigr\}.
\]
This is a well-defined $\sigma$-finite measure because of (\ref
{eqboundjumpsA}) and
$\mathbb{E}_\pi\|B(1)\|_1 = \mathbb{E}_\pi\|A(1)-A(0)-E\|_1\le
2\mathbb{E}_\pi\|A(0)\|_1+ J<\infty$,
so that
\[
\mathbb{E}_\pi\biggl|\sum_{s\le1}\bigl[f\bigl(Z(s),A(s)\bigr)-f\bigl(Z(s),A(s-)\bigr)\bigr]\biggr|<\infty
\]
for $f\in C_b^2(\mathbb
{R}^J_+\times\mathbb M^{J\times J})$.
It is possible to express these measures in terms of $\pi$
using the theory of distributions; this is done in Section~\ref{appjumpmeasure}.

The primary objective of this subsection is to show that the jump term
in (\ref{eqitopre}) vanishes
for a special class of functions, which is key in our proof of
Corollary~\ref{corbarprime}.
Throughout, we fix a set $I \subseteq\{1, \ldots, J\}$.
Recall the definition of $O_I$ in (\ref{eqOIf}). It is our aim to
show that
the jump term vanishes for functions of the form $O_I f$, where $f\in
C^2_b(\mathbb{R}^J_+\times\mathbb M^{J\times J})$
as before. We first introduce a lemma.

\begin{lemma} \label{lemgivanish}
For any $f\dvtx  \mathbb{R}^J_+ \times\mathbb M^{J\times J} \to\mathbb
{R}$, if $z_j=0$ for some $j \notin I$,
then for any $a \in\mathbb M^{J\times J}$ we have
\[
\sum_{S \subseteq{\{1,\ldots,J \}\setminus I} } (-1)^{|S|} f(\Pi_{S \cup I }
z, a) =0.
\]
In particular, if $z_j=0$ for some $j \notin I$, then we have $O_I f(z,a)=0$.
\end{lemma}
\begin{pf}
Suppose $z_j=0$ for some $j \notin I$. Then for any set $ S
{\subseteq\{1,\ldots,J\} \setminus I}$ with $j \notin S$, we have
$\Pi_{S \cup I} z= \Pi_{S \cup I \cup\{ j\} } z$.
Using this observation, we deduce that
\begin{eqnarray*}
&& \sum_{S \subseteq{\{1,\ldots,J \}\setminus I} } (-1)^{|S|} f(
\Pi_{S \cup I } z, a)
\\
&&\qquad = \sum_{S\subseteq\{1,\ldots,J\} \setminus I\dvtx j\in S} (-1)^{|S|} f(
\Pi_{S \cup I} z, a )
\\
&&\quad\qquad{}+ \sum_{S\subseteq\{1,\ldots,J\} \setminus I\dvtx  j\notin S}
(-1)^{|S|} f(\Pi_{S \cup I} z, a )
\\
&&\qquad =\sum_{S\subseteq\{1,\ldots,J\} \setminus I\dvtx  j\in S} (-1)^{|S|} f(
\Pi_{S\cup I} z, a )
\\
&&\quad\qquad{}+ \sum_{S\subseteq\{1,\ldots,J\}
\setminus I\dvtx  j\notin S }
(-1)^{|S|} f(\Pi_{S \cup I \cup\{ j\}} z, a )
\\
&&\qquad =\sum_{ S\subseteq\{1,\ldots,J\} \setminus I\dvtx j\in S} (-1)^{|S|} f(
\Pi_{S \cup I} z, a )
\\
&&\quad\qquad{}+ \sum_{\widetilde S\subseteq\{1,\ldots,J\}
\setminus I\dvtx  j\in\widetilde S}
(-1)^{|\widetilde S|-1} f(\Pi_{\widetilde S \cup
I } z, a )
\\
&&\qquad =0.
\end{eqnarray*}
The proof of the lemma is complete.
\end{pf}

Now we are ready to show that the jump term vanishes for functions of
the form
$O_I f$. For any $K \subseteq\{1, \ldots, J\}$, $Z_K$
denotes the process whose components are those of $Z$ with indices
in $K$.

\begin{lemma}
\label{lemnojumpterm}
For each $t \ge0$ and any measurable $f\dvtx  \mathbb{R}^J_+\times\mathbb
M^{J\times J}\to\mathbb{R}$, we have\vspace*{-2pt}
%
\begin{equation}
\label{eqnojumpterm} {\mathbb{E}_\pi} \sum_{s \le t}
{\bigl[O_I f\bigl({Z}(s),A(s)\bigr) - O_I f
\bigl({Z}(s),A(s-)\bigr)\bigr]}=0.
\end{equation}
\end{lemma}
\begin{pf}
By Lemmas~\ref{lemjumptime}~and~\ref{lemgivanish}, we have
\begin{eqnarray*}
&&  {\mathbb{E}_\pi} \sum_{s \le t} {
\bigl[O_I f\bigl({Z}(s),A(s)\bigr) - O_I f
\bigl({Z}(s),A(s-)\bigr)\bigr]}
\\[-1pt]
&&\qquad  = \sum_ { \varnothing\neq K \subseteq\{1,\ldots,J\}}{\mathbb {E}_\pi} \sum
_{s \le t\dvtx  Z_K(s)=0, Z_{\{1,\ldots,J\} \setminus
K}(s)>0} \bigl[O_I f\bigl({Z}(s),A(s)
\bigr)
\\[-3pt]
&&\hspace*{218.5pt}{}- O_If\bigl({Z}(s),A(s-)\bigr)\bigr]
\\[-1pt]
&&\qquad =\sum_ { \varnothing\neq K \subseteq I }{\mathbb{E}_\pi} \sum
_{s \le t\dvtx  Z_K(s)=0, Z_{\{1,\ldots,J\} \setminus
K}(s)>0} \bigl[O_I f\bigl({Z}(s),A(s)
\bigr)
\\[-3pt]
&&\hspace*{194.5pt}{} - O_I f\bigl({Z}(s),A(s-)\bigr)\bigr].
\end{eqnarray*}
Therefore, to show (\ref{eqnojumpterm}) it suffices to show for
each nonempty set $K \subseteq I$, we have
%
\begin{eqnarray}\label{eqfIvanish}
&& {\mathbb{E}_\pi} \sum_{s \le t\dvtx  Z_K(s)=0, Z_{\{1,\ldots,J\}
\setminus K}(s)>0}
\bigl[O_I f\bigl({Z}(s),A(s)\bigr)
\nonumber\\[-10pt]\\[-10pt]
&&\hspace*{130pt}{} - O_I f \bigl({Z}(s),A(s-)\bigr)\bigr]=0.\nonumber
\end{eqnarray}
To prove (\ref{eqfIvanish}) we first deduce from
Definition~\ref{defaugskorohod} that when $Z_K(s)=0$ and $Z_{\{
1,\ldots,J\}\setminus K}(s)>0$,\vspace*{-2pt}
\[
Q_K\bigl(A(s-)\bigr)=A(s).
\]
Next, since $K \subseteq I$, we use the projection property of the
operator $Q_I$ to obtain
\[
Q_I\bigl(A(s)\bigr)= Q_I \bigl(Q_K\bigl(A(s-)
\bigr)\bigr)=Q_I\bigl(A(s-)\bigr).
\]
Now (\ref{eqfIvanish}) readily follows from the definition of
$O_I $ as in (\ref{eqOIf}).
Thus we have completed the proof of the lemma.
\end{pf}

\subsection{Proofs of Theorem~\texorpdfstring{\protect\ref{thmbarprimenew}}{3} and Corollary~\texorpdfstring{\protect\ref{corbarprime}}{1}}\label{secproofThmCor}
We now prove Theorem~\ref{thmbarprimenew} and Corollary~\ref{corbarprime}.

\begin{pf*}{Proof of Theorem~\ref{thmbarprimenew}}
We rewrite the jump term in (\ref{eqbarprimepre}) using the jump measures.
In view of Lemmas~\ref{lemboundary} and \ref{lemjumpdir},
\begin{eqnarray*}
\hspace*{-4pt}&&  \mathbb{E}_\pi\sum_{s\le1}
\bigl[f\bigl(Z(s),A(s)\bigr)-f\bigl(Z(s),A(s-)\bigr)\bigr]
\\[-1pt]
\hspace*{-4pt}&&\!\qquad = \sum_{\varnothing\neq K\subseteq\{1,\ldots,J\}} \mathbb{E}_\pi\!\sum
_{s\le1\dvtx  Z_K(s)=0, Z_{K^c}(s)>0} \bigl[f\bigl(Z|_{K^c}(s),A(s)\bigr)
\\[-3pt]
\hspace*{-4pt}&&\!\hspace*{196pt}{} - f\bigl(Z|_{K^c}(s),A(s-)\bigr)\bigr]
\\
\hspace*{-4pt}&&\!\qquad = \sum_{\varnothing\neq K\subseteq\{1,\ldots,J\}} \mathbb{E}_\pi\!\sum
_{s\le1\dvtx  Z_K(s)=0, Z_{K^c}(s)>0, A(s)\neq A(s-)} \bigl[f\bigl(Z|_{K^c}(s),Q_K
\bigl(A(s-)\bigr)\bigr)
\\
\hspace*{-4pt}&&\!\hspace*{258pt}{} - f\bigl(Z|_{K^c}(s),A(s-)\bigr)\bigr]
\\
\hspace*{-4pt}&&\!\qquad =\sum_{\varnothing\neq K\subseteq\{1,\ldots,J\}} \int_{z_{K^c},a}
\bigl[f\bigl(z|_{K^c},Q_K(a)\bigr)-f(z|_{K^c},a)
\bigr] \,du_K(z_{K^c},a).
\end{eqnarray*}
Thus Theorem~\ref{thmbarprimenew} follows from (\ref{eqbarprimepre}).
\end{pf*}

\begin{pf*}{Proof of Corollary~\ref{corbarprime}}
Equation (\ref{eqbarprimedisag}) immediately follows from
(\ref{eqbarprimepre}) and Lemma~\ref{lemnojumpterm}. Summing all
the equations in (\ref{eqbarprimedisag}) over the sets $I \subseteq
\{1, \ldots, J\}$, we obtain (\ref{eqbarprime}).
\end{pf*}

\section{Jump measures}\label{appjumpmeasure}
In this section, we further investigate the jump term in~(\ref{eqitopre}),
resulting in a characterization of jump measures $u_I$ in terms of the
stationary distribution $\pi$.
We start with an auxiliary result on the measures~$u_I$.

\begin{lemma}
\label{lemsupportu}
For each $I\subseteq\{1,\ldots,J\}, I\neq\varnothing$ and $k=1,\ldots,J$,
we have $u_I (\{(z_{I^c},a)\dvtx  a_k=0\}) = 0$.
\end{lemma}
\begin{pf}
We exploit the dynamics of the augmented Skorohod problem.
Since $A_k(s-)=0$ implies $Z_k(s)=0$, we have $u_I(\{(z_{I^c},a)\dvtx
a_k=0\}) = 0$ for $k\in I^c$.
We next consider $k\in I$. Since the continuous part of $A_k^k$ is
strictly increasing when $Z_k>0$,
the only possibility for $Z_I(s)=0$, $A_k(s-)=0$, and $A(s)\neq A(s-)$
to occur simultaneously is
for $Z$ to hit the face $z_I=0$ without having left the face $z_k=0$
for some positive amount of time.
Since the time $Z$ spends on the boundary has Lebesgue measure zero,
this cannot happen almost surely.\vadjust{\goodbreak}
\end{pf}

To proceed with our description of the measures $u_I$, we need tools
from theory of distributions (or generalized
functions).
For background on this theory, see \citet{duistermaatdistributions2010}, \citet{Rudin-func}.
For $I\subseteq\{1,\ldots,J\}$, we define the operator $T_I^*$ on
distributions through
\[
T^*_If=\frac{1}{2}\sum_{i,j\in I}
\frac{{{\partial^2}}}{{\partial
{z_i}\partial{z_j}}} \bigl[\Sigma_{ij}(\cdot)f \bigr] 
- \sum_{j \in I} \theta_j
\frac{\partial}{{\partial
{z_j}}}f - \operatorname{tr}(\nabla_a f)
\]
for any distribution $f$.
With the understanding that we identify any probability measure with
the distribution it generates,
we can differentiate (probability) measures and $T_I^*$ can act on measures.
We also define
\[
d\pi_{I} (z_{I^c},a)= \int_{z_I} \,d
\pi(z,a).
\]
The main result of this section is that $u_I$ can be expressed in terms
of $\pi$.
Indeed, together with Lemma~\ref{lemsupportu}, it completely
determines $u_I$.

\begin{proposition}
For each $I\subseteq\{1,\ldots,J\}, I\neq\varnothing$, we have,
with $z_{I^c}\in(0,\infty)^{|I^c|},a\in\mathbb M^{J\times J}_+$ and
$a_k\neq0$ for $k=1,\ldots,J$,
\[
du_I(z_{I^c},a) = \sum_{K\subseteq I, K\neq\varnothing}
(-1)^{|I\setminus K|} \int_{z_{I\setminus K}} \bigl[T^*_{K^c} \,d\pi
_{K}\bigr](z_{K^c},a).
\]
\end{proposition}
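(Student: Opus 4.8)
The plan is to express the jump term in (\ref{eq:itopre}) in two different ways: once as an integral against the jump measures $u_I$, which was already done in the proof of Theorem~\ref{thm:barprimenew}, and once using the theory of distributions applied to Ito's formula restricted to a single face. The key is to run the argument of Section~\ref{sec:proofthm3} again, but now substituting test functions $f$ that are supported near the relative interior of the face $\{z_I=0\}$ and factor through the projection onto the complementary coordinates $z_{I^c}$. First I would fix $I$ and choose $f(z,a)=\varphi(z_{I^c},a)$ for smooth compactly supported $\varphi$ on $(0,\infty)^{|I^c|}\times\M^{J\times J}_+$; for such $f$ the process $f(Z(s),A(s))$ records only excursions during which $Z_{I^c}$ stays in a compact subset of $(0,\infty)^{|I^c|}$, so the reflection terms at faces $\{z_k=0\}$ for $k\in I^c$ do not contribute, while $dY_k$ for $k\in I$ and the jumps of $A$ (which by Lemma~\ref{lem:jumptime} occur exactly when $Z$ hits $\partial\R^J_+$) do contribute.

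The heart of the argument is then a distributional identity. On the set where $Z_{I^c}$ lives in $(0,\infty)^{|I^c|}$, the pair $(Z_I, A)$ together with $Z_{I^c}$ behaves, as far as the stationary equation is concerned, like a constrained diffusion in the lower-dimensional orthant $\R^{|I|}_+$ with the remaining coordinates as parameters. Integrating Ito's formula against $\pi$ and using stationarity, I would obtain that for all test functions $\varphi$,
\[
\int [T\varphi]\, d\pi + (\text{boundary terms on }\{z_k=0,k\in I^c\}) + (\text{jump terms}) = 0,
\]
and the boundary terms on the faces indexed by $I^c$ vanish by the support restriction on $\varphi$. Rewriting $\int [T\varphi]\,d\pi$ via integration by parts transfers all $z$-derivatives onto $\pi$, producing the adjoint operator; the ``radial'' part of $\theta$ and $\Sigma$ in the $z_I$-directions, which appears in the full generator $T$ but should not appear after restricting to the face, must be cancelled — this is where the alternating sum over nonempty $K\subseteq I$ and the operators $T^*_{K^c}$ enter. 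Concretely, I expect to show by an inclusion–exclusion over which subfaces $\{z_K=0\}$, $K\subseteq I$, the test function ``sees'' that
\[
\sum_{K\subseteq I,\,K\neq\emptyset}(-1)^{|I\setminus K|}\int_{z_{I\setminus K}}[T^*_{K^c}\,d\pi_K](z_{K^c},a)
\]
is precisely the distributional object that pairs with $\varphi$ to give the jump contribution, matching the $u_I$ identity from the proof of Theorem~\ref{thm:barprimenew}.

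The main obstacle is the bookkeeping of the boundary and subface contributions: when one restricts attention to $\{z_I=0\}$, the process $Z$ still spends time on the higher-codimension subfaces $\{z_K=0\}$ for $K\supsetneq I'$ within that face, and the reflection measures $\nu_k$, $k\in I$, as well as jump measures $u_{K}$ for $K\supsetneq$ the current index set, reappear; disentangling which of these survive in the stationary equation and showing that the ones that do are captured exactly by the alternating sum requires a careful inclusion–exclusion, and is the analog of the $O_I$ combinatorics already developed in Section~\ref{sec:proofthm3}. A secondary technical point is justifying the distributional manipulations: $\pi$ and $\pi_I$ are only measures, so $T^*_I$ acting on them is a distribution of order two, and one must check that pairing it with compactly supported smooth $\varphi$ supported away from $\{z_k=0:k\in I^c\}$ is well defined and that all integrations by parts are legitimate with no missing boundary contributions. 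Lemma~\ref{lem:supportu} handles the restriction to $a_k\neq 0$, so no further work is needed there; and the finiteness needed to apply Fubini and dominated convergence throughout is already guaranteed by (\ref{eq:boundjumpsA}) and the moment assumption $\int(\|z\|_2^2+\|a\|_1)\,d\pi<\infty$.
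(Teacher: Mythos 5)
Your overall strategy points in the right direction---start from (\ref{eq:itopre})/(\ref{eq:barprimepre}) together with the representation of the jump term through the measures $u_K$, specialize to test functions that are supported away from the faces $z_k=0$, $k\in I^c$, and do not depend on $z_I$, rewrite $\int Tf\,d\pi$ distributionally as $\int f(z|_{I^c},a)[T^*_{I^c}d\pi_I](z_{I^c},a)$, and finish by inclusion--exclusion. But there is a genuine gap at exactly the point you flag as ``the main obstacle'' and leave unresolved: with test functions of the form $f(z,a)=\varphi(z_{I^c},a)$ you have no mechanism to eliminate (i) the reflection (boundary) term $\int[R'\nabla_z f]\,d\nu$ coming from the directions $k\in I$, and (ii) the post-jump contributions $f(z|_{K^c},Q_K(a))$ in the jump term, which for your $\varphi$ become pushforwards of $u_K$ under $Q_K$ and cannot be absorbed into a recursion for the $u_K$ themselves. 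Your appeal to Lemma~\ref{lem:supportu} does not help here: that lemma says $u_I$ puts no mass on $\{a_k=0\}$, but the problematic integrands are evaluated at $Q_K(a)$ (whose rows in $K$ vanish), not at points charged by $u_K$, and it says nothing about $\nu$.

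The paper closes this gap with one additional requirement on the test functions: $f$ must also vanish on $\bigcup_i F_i^a$, i.e.\ whenever some row $a_i=0$. Since this is a constraint on the $a$-argument alone, $\nabla_z f$ then vanishes on $\bigcup_i(F_i\cap F_i^a)$, which is the support of $\nu$, so the entire $\nu$-term drops out (for all $k$, including $k\in I$); and since $Q_K(a)$ lies in $\bigcap_{k\in K}F_k^a$, all terms $f(z|_{K^c},Q_K(a))$ vanish as well, while jumps with $K\not\subseteq I$ are killed by the support restriction in $z_{I^c}$. What remains is the clean identity
\begin{equation*}
\int_{\R_+^J\times \M^{J\times J}_+} Tf(z,a)\,d\pi(z,a)
=\sum_{L\subseteq I:\,L\neq I}\int_{z_{I^c},a} f(z|_{I^c},a)\int_{z_L} du_{I\backslash L}(z_{I^c\cup L},a),
\end{equation*}
which, combined with the distributional rewriting of the left-hand side, gives the recursion
$du_I = T^*_{I^c}d\pi_I - \sum_{\emptyset\neq L\subsetneq I}\int_{z_L}du_{I\backslash L}$ on the region $z_{I^c}\in(0,\infty)^{|I^c|}$, $a_k\neq 0$; induction on $|I|$ (or inclusion--exclusion) then yields the stated alternating sum. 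Without the extra vanishing condition on the $a$-faces, your stationary identity retains the unknown $\nu$-contributions and the $Q_K$-pushforward terms, and the ``careful inclusion--exclusion'' you defer is precisely the step that cannot be carried out in your framework as written. A smaller inaccuracy: no reduction to a lower-dimensional constrained diffusion on $\R^{|I|}_+$ is needed or used; the argument is purely an identity for the stationary measure obtained by the choice of test-function class.
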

\begin{pf}
Equation~(\ref{eqbarprimepre}) forms the basis of the proof, together
with the identity
\begin{eqnarray*}
&&  \mathbb{E}_\pi\sum_{s\le1}
\bigl[f\bigl(Z(s),A(s)\bigr)-f\bigl(Z(s),A(s-)\bigr)\bigr]
\\
&&\qquad =\sum_{\varnothing\neq K\subseteq\{1,\ldots,J\}} \int_{z_{K^c},a}
\bigl[f\bigl(z|_{K^c},Q_K(a)\bigr)-f(z|_{K^c},a)
\bigr] \,du_K(z_{K^c},a),
\end{eqnarray*}
which was established in Section~\ref{secproofThmCor}.
Fix some nonempty $I\subseteq\{1,\ldots,J\}$.
For $f\in C^2_b(\mathbb{R}_+^J\times\mathbb M_+^{J\times J})$ with
the property that $f$
vanishes on $\bigcup_{i\in I^c} F_i \cup\bigcup_i F_i^a$, (\ref
{eqbarprimepre}) reduces to
\[
\int_{\mathbb{R}_+^J\times\mathbb M^{J\times J}_+} Tf(z,a) \,d\pi(z,a)= \sum
_{L\subseteq I\dvtx L\neq I} \int_{z_{I^c\cup L},a} f(z|_{I^c \cup
L},a)
\,du_{I\setminus L}(z_{I^c\cup L},a).
\]
If moreover $f(z,a)$ does not depend on $z_I$, this can be simplified further,
%
\begin{eqnarray}\label{eqpiandu}
&& \int_{\mathbb{R}_+^J\times\mathbb M^{J\times J}_+} Tf(z,a) \,d\pi(z,a)
\nonumber\\[-8pt]\\[-8pt]
&&\qquad = \sum_{L\subseteq I\dvtx L\neq I} \int_{z_{I^c},a} f(z|_{I^c},a)
\int_{z_L} \,du_{I\setminus L}(z_{I^c\cup L},a).\nonumber
\end{eqnarray}
The left-hand side can be rewritten using the theory of differentiation
for distributions
\citet{duistermaatdistributions2010}, Chapter~4, or \citet{Rudin-func}, Section~II.6.12.
This leads to
\[
\int_{\mathbb{R}^J_+ \times\mathbb M^{J\times J}} T f(z,a) \,d\pi (z,a) = \int_{z_{I^c},a}
f(z|_{I^c},a) \bigl[T^*_{I^c} \,d\pi_I
\bigr](z_{I^c},a).
\]
Combining this with (\ref{eqpiandu}) and rearranging terms, we get
\begin{eqnarray*}
&& \int_{z_{I^c},a} f(z|_{I^c},a)
\,du_I(z_{I^c},a)
\\
&&\qquad = \int_{z_{I^c},a} f(z|_{I^c},a)\bigl[T^*_{I^c}
\,d\pi_I\bigr] (z_{I^c},a)
\\
&&\quad\qquad{} - \sum
_{L\subseteq I\dvtx L\neq\varnothing, L\neq I} \int_{z_{I^c},a} f(z|_{I^c},a)
\int_{z_L} \,du_{I\setminus L} (z_{I^c\cup L},a).
\end{eqnarray*}
This shows that, for $z_{I^c}\in(0,\infty)^{|I^c|},a\in\mathbb
M^{J\times J}_+$ and $a_k\neq0$ for $k=1,\ldots,J$,
\[
du_I(z_{I^c},a) = T^*_{I^c} \,d\pi_I
(z_{I^c},a) - \sum_{L\subseteq I,L\neq\varnothing,
L\neq I} \int
_{z_L} \,du_{I\setminus L} (z_{I^c\cup L},a).
\]
Since $|I\setminus L|<|I|$, this representation allows us to finish
the proof
of the proposition by an elementary induction argument on $|I|$.
Alternatively, one could use a version of the inclusion-exclusion
principle \citet{stanleyec1}, Section~2.1.
\end{pf}


\begin{appendix}
\section{Proof of (\texorpdfstring{\lowercase{\protect\ref{eq1dlap}}}{2.5})}\label{apppiLaplace}
This appendix uses Theorem~\ref{thmonedim} to find the Laplace
transform of the stationary
distribution $\pi$ of $(Z,A)$ in the one-dimensional case,
thereby showing in particular that Theorem~\ref{thmonedim} completely
determines $\pi$.
Writing $\mathcal L(\alpha,\eta)$ for the Laplace transform of
$\pi$, Theorem~\ref{thmonedim} implies that
%
\begin{equation}
\label{eqBARlapl} \bigl(\tfrac{1}2\sigma^2\alpha^2
-\alpha\theta-\eta \bigr)\mathcal L(\alpha,\eta) +\eta\mathcal L(0,\eta) +\alpha
\theta=0.
\end{equation}
In particular, on setting $\eta=\frac{1}2\sigma^2\alpha^2 -\alpha
\theta$ we get
\[
\bigl[\tfrac{1}2\sigma^2\alpha^2 -\alpha\theta
\bigr]\mathcal L \bigl(0,\tfrac{1}2\sigma^2
\alpha^2 -\alpha\theta \bigr) +\alpha \theta=0.
\]
After substitution of $\alpha=(\theta+\sqrt{\theta^2+
2\sigma^2\eta})/\sigma^2$, we find that
\[
\eta\mathcal L(0,\eta) = -\theta \biggl[\frac{\theta+\sqrt{\theta
^2+2\sigma^2\eta}}{\sigma^2} \biggr].
\]
Substituting this back into (\ref{eqBARlapl}) and simplifying
the resulting expression, we obtain the Laplace transform given in
(\ref{eq1dlap}).

\section{The augmented Skorohod problem and~uniqueness}\label{appuniqueness}
In this appendix, we prove that the augmented Skorohod problem
admits a unique solution.
To this end, we employ a similar contraction map as in Lemma~3.6 of
\citet{Mandel-ramanan}.
Define a map $\Lambda$ from $\mathbb{D}^{J \times J}$ to $\mathbb
{D}^{J \times J}$ by setting, for $t\ge0$,
%
\begin{equation}
\label{eqmapLambda} {\Lambda(b )}_i^j(t)= \sup
_{s \in{\Phi_{(i)}}(t)} \bigl[ \chi_i^j(s) + \bigl[
\widetilde P b^j \bigr]_i(s)\bigr].
\end{equation}
Momentarily we show that $\Lambda$ is a contraction map, and
thus $\Lambda$ has a unique fixed point $b$. This also implies that,
defining $b^{(0)}=0$ and $b^{(n)}={\Lambda(b^{(n-1)} )}$ for $n\ge1$,
we have $\|{b^{(n)}- b}\|_T \rightarrow0$ as $n
\rightarrow\infty$ for every $T>0$.
Here and throughout this proof, we write $\|x\|_T=\sup_{t\in[0,T]}
|x(t)|$; this
should not be confused with the \mbox{1-}norm and 2-norm used elsewhere in
this paper.
Since $\chi$ is nonnegative and nondecreasing
and $\widetilde P$ is nonnegative, we deduce that $b^{(n)}$ is
componentwise nonnegative and nondecreasing for each $n$.
Therefore, we obtain that the fixed point $b$ is also nonnegative and
nondecreasing. Now let $a = \chi-\widetilde R b$, $z=\Gamma(x)$, and
$y=\Phi(x)$. We now verify directly that $(z, y, a, b)$ is a
solution to the augmented Skorohod problem.
Only the fourth and fifth requirement in Definition~\ref{defaugskorohod} are not immediate.
The fourth requirement can be shown to hold using the same argument as
in the proof of Lemma~\ref{lemZgammacomp}.
For the fifth requirement, we note that if $z_i(t)=0$,
(\ref{eqmapLambda}) implies that for each $j$,
\[
b^j_i(t)=\chi^j_i(t)+\bigl(
\widetilde P b^j\bigr)_i(t),
\]
which yields
\[
a_i(t)=\chi_i(t)-(\widetilde R b)_i(t)=
\chi_i(t)+(\widetilde P b)_i(t)-b_i(t)=0.
\]

To establish the uniqueness of solutions to the augmented Skorohod
problem, we use the contraction map $\Lambda$. Suppose $(z, y,
a, b)$ solves the augmented Skorohod problem. Let $\tilde b= \Lambda
(b)$. If we can show that $\tilde b= b$, meaning $b$ is a fixed
point of $\Lambda$, then it follows from the uniqueness of the fixed
point that there must be a unique solution to the augmented Skorohod
problem. Suppose there exists some $i,j$ and $t_0$ such that
$\tilde b_i^j (t_0) \ne b_i^j (t_0)$. We discuss two cases. If
$z_i(t_0)=0$, using nonnegativity and monotonicity of $b$, one can
check from (\ref{eqmapLambda}) that $\tilde b_i^j
(t_0)=\chi_i^j(t_0) + [\widetilde P b^j ]_i(t_0)$. From the
definition of the augmented Skorohod problem, we also know that
$z_i(t_0)=0$ implies $a_i^j(t_0)=\chi_i^j(t_0) + [\widetilde R b^j
]_i(t_0)=0$. Therefore, we have $\tilde b_i^j (t_0) = b_i^j
(t_0)$, a contradiction. Now consider the second case
where we have $z_i(t_0)>0$. If the set $\Phi_{(i)}(t_0)$ is empty, we
have $\tilde b_i^j (t_0)=b_i^j (t_0)=b_i^j(0)=0$. If not, let
$s$ be the maximal element in $\Phi_{(i)}(t_0)$. We deduce from the
previous case in conjunction with the complementarily condition
(\ref{eqzdeltacomplem}) that $b_i^j(t_0)=b_i^j(s)= \tilde
b_i^j(s)= \tilde b_i^j(t_0)$. This is again a contradiction.
Therefore, we obtain $\tilde b=b$ and infer that the augmented
Skorohod problem has a unique solution.

It remains to show that $\Lambda$ is a contraction map on $\mathbb
{D}^{ J \times
J}$, which is equipped with the uniform norm on compact sets. As in
the proof of Lemma~3.6 in \citet{Mandel-ramanan} we assume that,
without loss of generality, the maximum row sum of $\widetilde P$ is
$\eta<1$. It is easy to verify that for any fixed $T>0$,
\[
\bigl\|\Lambda(b)- \Lambda\bigl(b'\bigr)\bigr\|_T \le\eta\bigl\|b -
b'\bigr\|_T
\]
for all $b, b' \in\mathbb{D}^{ J \times J}$. Thus we have proved the
existence and uniqueness of a fixed point for $\Lambda$.
\end{appendix}

\section*{Acknowledgments}
We thank Mark Squillante and Soumyadip Ghosh for valuable discussions,
and Kavita Ramanan for comments on an earlier draft.
The comments of the referees have led to significant improvements,
and we thank them for their careful reading.
Part of this research was carried out while
ABD enjoyed the hospitality of the Korteweg-de Vries Institute.


%

\printaddresses

\end{document}